\documentclass[11pt]{amsart}

\usepackage{amscd}
\usepackage{amsmath, amssymb}
\usepackage{amsfonts}
\newcommand{\de}{\partial}
\newcommand{\db}{\overline{\partial}}

\newcommand{\ddbar}{\sqrt{-1} \partial \overline{\partial}}
\newcommand{\Ric}{\mathrm{Ric}}
\newcommand{\ov}[1]{\overline{#1}}

\newcommand{\tr}[2]{\mathrm{tr}_{#1}{#2}}

\newcommand{\vp}{\varphi}

\newcommand{\ve}{\varepsilon}

\numberwithin{equation}{section}

\renewcommand{\leq}{\leqslant}
\renewcommand{\geq}{\geqslant}
\renewcommand{\le}{\leqslant}
\renewcommand{\ge}{\geqslant}

\begin{document}
\newcounter{remark}
\newcounter{theor}
\setcounter{remark}{0}
\setcounter{theor}{1}
\newtheorem{claim}{Claim}
\newtheorem{theorem}{Theorem}[section]
\newtheorem{lemma}[theorem]{Lemma}
\newtheorem{conj}[theorem]{Conjecture}
\newtheorem{conjq}[theorem]{Conjecture/Question}
\newtheorem{corollary}[theorem]{Corollary}
\newtheorem{proposition}[theorem]{Proposition}
\newtheorem{question}[theorem]{Question}
\newtheorem{defn}{Definition}[theor]

\newenvironment{example}[1][Example]{\addtocounter{remark}{1} \begin{trivlist}
\item[\hskip
\labelsep {\bfseries #1  \thesection.\theremark}]}{\end{trivlist}}

\title{The Chern-Ricci flow}

\author[V. Tosatti]{Valentino Tosatti}
\address{Department of Mathematics and Statistics, McGill University, Montr\'eal, Qu\'ebec H3A 0B9, Canada}
\email{valentino.tosatti@mcgill.ca}

\author[B. Weinkove]{Ben Weinkove}
\address{Department of Mathematics, Northwestern University, 2033 Sheridan Road, Evanston, IL 60208}
\email{weinkove@math.northwestern.edu}
\begin{abstract}
We give a survey on the Chern-Ricci flow, a parabolic flow of Hermitian metrics on complex manifolds.  We emphasize open problems and new directions.
 \end{abstract}

\maketitle

\section{Introduction}

On a K\"ahler manifold $(M, \omega_0)$, a solution of the Ricci flow starting from $\omega_0$ is given by a smooth family of K\"ahler metrics $\omega=\omega(t)$ satisfying
\begin{equation} \label{RF}
\left\{
                \begin{aligned}
                  &\frac{\de \omega}{\de t}  =-\mathrm{Ric}(\omega), \quad 0\leq t<T,\\
                  &\omega(0)=\omega_0,
                \end{aligned}
              \right.
\end{equation}
for $T\in (0,\infty]$.  Here $\textrm{Ric}(\omega)$ is the $(1,1)$ form associated to the Ricci curvature of the underlying metric $g$ associated to $\omega$ and is given explicitly as
\begin{equation} \label{Ric}
\textrm{Ric}(\omega) \overset{{\rm loc}}{:=} - \ddbar \log \det g.
\end{equation}

The equation (\ref{RF}) is known as the \emph{K\"ahler-Ricci flow}.  It was pointed out by Hamilton that his Ricci flow \cite{H} preserves the K\"ahler condition of a metric (at least in the compact case, see Shi \cite{Shi2} for the complete noncompact case).  H.D. Cao \cite{Cao} first studied the K\"ahler-Ricci flow in its own right, and observed that it is equivalent to a scalar parabolic complex Monge-Amp\`ere equation.

More recently there has been interest in the study of nonlinear PDEs, including complex Monge-Amp\`ere equations and geometric flows, in the setting of \emph{non-K\"ahler} complex manifolds.  All complex manifolds admit Hermitian metrics.  We will identify such a Hermitian metric $g$, as in the K\"ahler case, with its $(1,1)$ form $\omega = \sqrt{-1} g_{i\ov{j}}dz^i \wedge d{\ov{z}}^j$.  In general $\omega$ is not closed.  Moreover, the Ricci curvature of the Levi-Civita connection of $g$ is in general not of type $(1,1)$ and hence Hamilton's Ricci flow will not in general give a flow of Hermitian metrics.  It is natural then to look for another geometric flow of Hermitian metrics which specializes to the Ricci flow when $\omega_0$ is K\"ahler.   Many such flows, including the titular \emph{Chern-Ricci flow}, have been proposed and investigated (see for example \cite{GSt, LY, PPZsurvey, PPZ2, St, STi, STi2} and Section \ref{other} for some discussions of other flows).

 Indeed, the level of interest on flows on complex manifolds has reached the point where it has a newly designated classification code in the 2020 Mathematics Subject Classification, ``53E30 Flows related to complex manifolds (e.g., K\"ahler-Ricci flows, Chern-Ricci flows)''.

The Chern-Ricci flow was first studied by Gill \cite{G} in a special setting, and then introduced and studied in general by the authors \cite{TW}.  It is arguably the simplest to write down.  In fact, the Chern-Ricci flow is formally just (\ref{RF}), (\ref{Ric}) above.  The difference is that now $\omega(t)$ are a family of \emph{Hermitian} metrics on the complex manifold $M$, with $\omega_0$ a given Hermitian metric.  The $\omega(t)$ satisfy as above
\begin{equation} \label{CRF}
\left\{
                \begin{aligned}
                  &\frac{\de \omega}{\de t}  =-\mathrm{Ric}(\omega), \quad 0\leq t<T,\\
                  &\omega(0)=\omega_0,
                \end{aligned}
              \right.
\end{equation}
for $T\in (0,\infty]$.  The only difference is that now $$\textrm{Ric}(\omega) \overset{{\rm loc}}{:=} -\ddbar \log \det g$$ is no longer the Ricci curvature of the Levi-Civita connection of $g$, but rather the \emph{Chern-Ricci form} of the Hermitian metric $g$.  The Chern-Ricci form can be identified with a Ricci curvature tensor of the Chern connection of $g$.

We end the introduction by giving a brief outline of this article.

\medskip

\noindent
\emph{Reduction to a scalar equation.}  In Section \ref{sect_pde} we explain how the Chern-Ricci flow can be reduced to a scalar equation of parabolic complex Monge-Amp\`ere type.  This is a key feature of the Chern-Ricci flow which, as for the K\"ahler-Ricci flow, allows us to make use of analytic tools from the study of complex Monge-Amp\`ere equations to prove long time existence and convergence results.

\bigskip

\noindent
\emph{Examples.} On locally homogeneous complex manifolds one can construct explicit solutions to the Chern-Ricci flow.  We explain this briefly in Section \ref{sectex}.

\medskip

\noindent
\emph{Compact complex surfaces.}   The study of the Chern-Ricci flow on compact complex surfaces is the main focus of this survey.   A rather  detailed picture has emerged of the known and expected behavior of the Chern-Ricci flow on \emph{minimal} (i.e. no smooth rational curves of self-intersection $-1$) compact complex surfaces.  This is explained in Section \ref{sectsurf1}.  There has been great success so far in understanding the case of minimal surfaces with Kodaira dimension greater than or equal to zero, while major challenges remain for those of negative Kodaira dimension.  In Section \ref{sectionnmcs} we explain what is known so far about non-minimal complex surfaces.

\medskip
\noindent
\emph{Singularities.}  In Sections \ref{sectfin} and \ref{sectinfin} we discuss the nature of singularity formation for the Chern-Ricci flow in the case of finite time and infinite time singularities respectively.

\medskip
\noindent
\emph{Further questions and directions.}  Finally in Section \ref{sectfurther} we discuss, rather informally, some further questions and new directions for the study of the Chern-Ricci flow and other related flows.   In particular we highlight a success of Lee-Tam \cite{LeeTam} on using the Chern-Ricci flow to construct solutions of the K\"ahler-Ricci flow on complete \emph{non-compact} manifolds, with an application to Yau's Uniformization Conjecture.\\

\noindent
{\bf Acknowledgments. }Research supported in part by   NSF grants DMS-1903147 (VT) and DMS-2005311 (BW). Part of this work was carried out while the first-named author was visiting the Center for Mathematical Sciences and Applications at Harvard University and the second-named author was visiting the Department of Mathematical Sciences at the University of Memphis, whom we would like to thank for  their kind support and hospitality.

\section{The parabolic complex Monge-Amp\`ere equation}\label{sect_pde}

On a compact complex manifold, the Chern-Ricci flow is equivalent to a parabolic equation for a scalar function \cite{G, TW}, just like the K\"ahler-Ricci flow.  Namely, there exists a solution $\omega=\omega(t)$ of (\ref{CRF}) for $t$ in $[0,T)$ if and only if there exists a solution $\varphi=\varphi(t)$ for $t$ in $[0,T)$ of
\begin{equation} \label{vpe}
\left\{\begin{aligned}
& \frac{\partial \varphi}{\partial t} =  \log \frac{(\omega_0 - t \textrm{Ric}(\omega_0) +\ddbar \varphi)^n}{\omega_0^n}, \\
 & \omega_0 - t \textrm{Ric}(\omega_0) +\ddbar \varphi >  0, \\
 & \varphi(0)=  0.
 \end{aligned} \right.
\end{equation}
Indeed, given $\varphi(t)$ solving (\ref{vpe}) then $\omega(t) := \omega_0 - t\textrm{Ric}(\omega_0) + \ddbar \varphi(t)$ satisfies
$$\frac{\partial}{\partial t} \omega(t)  = - \textrm{Ric}(\omega_0) + \ddbar \log \frac{ \omega(t)^n}{\omega_0^n} = - \textrm{Ric}(\omega(t)),$$
and hence solves (\ref{CRF}).

Conversely, given a solution $\omega(t)$ of (\ref{CRF}) we define  $\varphi(x,t)$, for each fixed $x \in M$, by
$$\vp(x,t)=\int_0^t\log\frac{(\omega(x,s))^n}{(\omega_0(x))^n}ds,$$
which thus satisfies the ODE
$$\frac{\partial}{\partial t} \varphi (x,t) = \log \frac{(\omega(x,t))^n}{(\omega_0(x))^n}, \quad \varphi(x,0)=0.$$
Then one can compute that
$$\frac{\partial}{\partial t} (\omega(t) - \omega_0 + t\textrm{Ric}(\omega_0) -\ddbar \varphi)=0,$$
$$\left(\omega(t) - \omega_0 + t\textrm{Ric}(\omega_0) -\ddbar \varphi\right)|_{t=0}=0,$$
and so $\omega(t) = \omega_0 -  t\textrm{Ric}(\omega_0) +\ddbar \varphi>0$ for all $t\in [0,T)$, giving that $\varphi(t)$ solves (\ref{vpe}).

We observe that (\ref{vpe}) is a scalar parabolic equation (of complex Monge-Amp\`ere type) and hence, since $M$ is compact, the Chern-Ricci flow always admits a unique solution on some maximal time interval $[0,T)$ for some $0 <T \le \infty$, by standard parabolic theory (see e.g. \cite[Theorem 3.5]{Tos2} which can be immediately adapted to this setting).  In \cite{TW} it was shown that in fact $T$ can be  characterized as:
\begin{equation} \label{T}
T =  \sup \left\{  t > 0 \, | \, \exists \psi \in C^{\infty}(M)\ \textrm{with } \omega_0 - t\textrm{Ric}(\omega_0) + \ddbar \psi >0 \right\}.
\end{equation}
Note that to see that $T$ is bounded from above by the right hand side of (\ref{T}) is an immediate consequence of (\ref{vpe}).  The reverse inequality requires \emph{a priori} estimates.

We remark that when $\omega_0$ is K\"ahler, the result (\ref{T}) is due to Tsuji \cite{Ts2} and Tian-Zhang \cite{TZ} (see also \cite{Cao, Ts}), and in this case the condition can be expressed in terms of the positivity of the cohomology class $[\omega_0 - t \textrm{Ric}(\omega_0)] = [\omega_0] + t 2\pi c_1(K_M)$, where $K_M$ is the canonical bundle of $M$.

It turns out that in many cases it is rather straightforward to compute $T$, especially in the case of complex surfaces which we will discuss in the next two sections.
In particular, we mention a useful result, proved in \cite[Theorem 2.1]{TWY}:  we have $T=\infty$ if and only if the canonical bundle $K_M$ is \emph{nef} (in the analytic sense).
This is in fact an easy consequence of (\ref{T}) and the definition of $K_M$ being nef, which can be stated as follows:  for every $\ve>0$ there exists $\psi_{\ve}\in C^{\infty}(M)$ such that $-\textrm{Ric}(\omega_0) + \ddbar \psi_{\ve} >- \ve \omega_0$.

Observe that, as is the case for the K\"ahler-Ricci flow, the condition that $T=\infty$ is independent of the initial metric $\omega_0$.

Lastly, we briefly discuss the setting of Gill's paper \cite{G} (see also \cite{Sm} for a recent generalization). There, he considered the parabolic complex Monge-Amp\`ere equation
\begin{equation} \label{vpe2}
\left\{\begin{aligned}
& \frac{\partial \varphi}{\partial t} =  \log \frac{(\omega_0 +\ddbar \varphi)^n}{e^F\omega_0^n}, \\
 & \omega_0 +\ddbar \varphi >  0, \\
 & \varphi(0)=  0.
 \end{aligned} \right.
\end{equation}
on an arbitrary compact Hermitian manifold $(M,\omega_0)$, where $F\in C^\infty(M)$ is also arbitrary, and proved that this equation always has long-time existence and convergence (after normalization) to the solution of an elliptic complex Monge-Amp\`ere equation, which had been previously solved by the authors \cite{TW2}. This is the Hermitian analogue of the result of Cao \cite{Cao} for who gave a parabolic approach to Yau's theorem on compact K\"ahler manifolds \cite{Y}.  The PDE \eqref{vpe2} (with suitable choice of $F$) is equal to the Chern-Ricci flow equation \eqref{vpe} only in the case when $\Ric(\omega_0)=\ddbar\psi$ for some $\psi\in C^\infty(M)$. These manifolds with vanishing {\em first Bott-Chern class} \cite{Tos} will be discussed below.

The general Chern-Ricci flow \eqref{CRF} was introduced by the authors in \cite{TW}, who also proved the above equivalence with \eqref{vpe} and the characterization \eqref{T} of the maximal existence time.

\section{Examples}\label{sectex}

One can write down many explicit solutions of the Chern-Ricci flow on compact complex manifolds which are locally homogeneous (equivalently, they admit a geometric structure in the sense of Thurston \cite{Wa}). Indeed, let $(M^n,\omega)$ be a locally homogeneous compact Hermitian manifold. Its universal cover is then biholomorphic to $\Gamma\backslash G$ where $G$ is a Lie group with a left-invariant complex structure, and $\Gamma\subset G$ is a discrete subgroup, and $\omega$ is induced by a left-invariant Hermitian metric on $G$.

If $(M,\omega_0)$ is locally homogeneous, then it was observed in \cite{LR} that
\begin{equation}\label{evolv}
\omega(t):=\omega_0-t\mathrm{Ric}(\omega_0),
\end{equation}
is always a solution of the Chern-Ricci flow \eqref{CRF}, which is still locally homogeneous. Now, locally homogeneous compact complex surfaces (that is, of complex dimension $2$) were classified by Wall \cite{Wa}, they are all minimal, and the non-K\"ahler ones are exactly diagonal Hopf surfaces with eigenvalues of the contraction with equal norm, and all Inoue surfaces, Kodaira surfaces and non-K\"ahler minimal properly elliptic surfaces. There are $6$ possibilities for the Lie group $G$.

The evolved metrics \eqref{evolv} were explicitly computed in all these cases in \cite{TW3} (except Kodaira surfaces, for which the behavior of the flow was already determined in \cite{G}, but these can be easily treated in the same way), where the Gromov-Hausdorff limit of $(M,\omega(t))$ as $t\to T$ was also determined in each case. Some of these will be discussed below.

\section{Minimal complex surfaces} \label{sectsurf1}

Let $M$ be a compact complex surface (namely, of complex dimension 2).  For this and the next section we will discuss what is known, and not known, about the behavior of the Chern-Ricci flow on $M$.  It is convenient for us to make an assumption that the initial Hermitian metric $\omega_0$ is \emph{Gauduchon}, which means that
\begin{equation} \label{ddbc}
\partial \overline{\partial} \omega_0 =0,
\end{equation}
or in other words $\omega_0$ is $\partial \ov{\partial}$-closed, or \emph{pluriclosed}.  Note that for general dimension $n\ge 2$ the Gauduchon condition is $\partial \ov{\partial}\omega_0^{n-1}=0$, a \emph{non-linear} equation in $\omega_0$.

It is natural to restrict to Gauduchon metrics in complex dimension 2.  Indeed, the well-known theorem of Gauduchon \cite{Ga} states that there exists a unique (up to scaling) Gauduchon metric in the conformal class of any Hermitian metric.  Hence it imposes no assumption on $M$.  On the other hand the $\partial\ov{\partial}$-closed condition  (\ref{ddbc}) is preserved along the Chern-Ricci flow, since the Chern-Ricci form is $d$-closed and therefore $\partial\ov{\partial}$-closed.  So it is also natural in the context of the Chern-Ricci flow.  Hence for the rest of this section and the next we will assume (\ref{ddbc}), even though in fact some of the results discussed do not actually require it.

In particular, under this Gauduchon assumption, the maximal existence time $T$ of the Chern-Ricci flow is equal to the first time when either the total volume of $M$ or the volume of some holomorphic curve $C$ with negative self-intersection goes to zero \cite[Corollary 1.4]{TW}, and these volumes can be effectively computed as
$$\mathrm{Vol}(M,\omega(t))=\frac{1}{2}\int_M(\omega_0-t\Ric(\omega_0))^2,\quad \mathrm{Vol}(C,\omega(t))=\int_C(\omega_0-t\mathrm{Ric}(\omega_0)).$$
This allows us to easily compute $T$ on all compact complex surfaces.

In this section we will analyze the case when $M$ is in addition \emph{minimal}, in the sense of complex geometry.  This means that $M$ admits no $(-1)$-curves, namely no smooth rational (i.e. biholomorphic to $\mathbb{P}^1)$ curve $E$ with self-intersection number $E\cdot E=-1$.

It is a classical fact that from every complex surface one can obtain a minimal complex surface after a finite number of birational transformations known as \emph{blow-downs}.  Indeed if $M$ admits a $(-1)$ curve $E$ then one can explicitly construct (see e.g. \cite{bhpv}) a holomorphic surjective map  $\pi:M \rightarrow N$, for $N$ a smooth compact complex surface, with  $\pi(E)=p \in N$ and such that $\pi$ restricted to $M \setminus E$ is a biholomorphism onto $N \setminus  \{p \}$.  If $N$ admits a $(-1)$ curve then repeat.  This process must terminate since the second Betti number of $N$ is strictly less than that of $M$.

To understand complex surfaces then, it makes sense to start with the minimal ones.  In  Section \ref{sectionnmcs} we will discuss the case when $M$ is not minimal.

We divide the minimal complex surfaces into groups according to their \emph{Kodaira dimension}.  This is a birational invariant defined in terms of $H^0(M, K_M^{\ell})$ for $\ell \ge 1$, the complex vector space of global holomorphic sections of the $\ell$th power of the canonical bundle $K_M$ of $M$.
 If all of the spaces $H^0(M, K_M^{\ell})$ for $\ell\ge 1$ are trivial then we define $\textrm{Kod}(M) =-\infty$, and otherwise we let
$$\mathrm{Kod}(M)=\limsup_{\ell\to+\infty}\frac{\log \dim H^0(M, K_M^{\ell})}{\log\ell},$$
which can be shown to be an integer that satisfies $0\leq \mathrm{Kod}(M)\leq 2$.  Geometrically, the Kodaira dimension of $M$ is measuring the dimension of the image of the $\ell$-\emph{canonical (meromorphic) map} $\Phi_{\ell} : M \dashrightarrow \mathbb{P}(H^0(M, K_M^{\ell})^*)$, for $\ell$ sufficiently large and divisible.

There are then four possibilities:  $\textrm{Kod}(M)= 2,1,0, -\infty$.  We will start with $\textrm{Kod}(M)=2$ and work downwards.

In general, it is useful to know that if $M$ is minimal and $\mathrm{Kod}(M)\geq 0$ then $K_M$ is nef  \cite[Corollary III.2.4]{bhpv}.

\subsection{$\textrm{Kod}(M)=2$.}  \label{sectionkod2} In this case $M$ is projective algebraic, a \emph{minimal surface of general type}, and in particular admits a K\"ahler metric.    We consider here the case when $\omega_0$ is \emph{not} K\"ahler.  Since $K_M$ is nef the solution to the Chern-Ricci flow exists for all time starting at any Hermitian $\omega_0$.
 The authors showed in \cite{TW} that when $K_M$ is ample we have smooth convergence
 $$\frac{\omega(t)}{t} \rightarrow \omega_{\textrm{KE}}, \quad \textrm{as } t\rightarrow \infty,$$
 where  $\omega_{\textrm{KE}}$ is the unique K\"ahler-Einstein metric on $M$ satisfying $$\textrm{Ric}(\omega_{\textrm{KE}})=-\omega_{\textrm{KE}}.$$
 Gill \cite{G3} considered the case when $K_M$ is not ample.  He showed that $\omega(t)/t \rightarrow \omega_{\textrm{KE}}$ in $C^{\infty}_{\textrm{loc}}(M \setminus S)$ as $t \rightarrow \infty$ where $S$ is the (nonempty) union of all $(-2)$ curves on $M$ and $\omega_{\textrm{KE}}$ is a K\"ahler-Einstein metric with orbifold singularities on $M$.   We note that the results of \cite{TW, G3} do not require $\omega_0$ to be Gauduchon, and also hold in higher dimension, except that in higher dimensions $\omega_{\rm KE}$ has more complicated singularities along an analytic subvariety $S$, although it has continuous local potentials by \cite{Ngu}. The subvariety $S$ is by definition the union of all positive-dimensional irreducible analytic subvarieties $V\subset M$ such that
 $(V\cdot K_M^{\dim V})=0,$ and this union is itself a closed analytic subvariety by the main result of \cite{CT}.

When $\omega_0$ is K\"ahler these results were established by Cao \cite{Cao} in the case of $K_M$ ample and Tsuji \cite{Ts} (see also Tian-Zhang \cite{TZ}) when not.

The following problem is thus very natural:
\begin{conj}\label{mingen} Let $M^n$ be a compact complex manifold with $K_M$ nef and $\mathrm{Kod}(M)=n$, and let $\omega(t)$ be the solution of the Chern-Ricci flow \eqref{CRF} starting at an arbitrary Hermitian metric $\omega_0$. Then we have
$$\mathrm{diam}\left(M,\frac{\omega(t)}{t}\right)\leq C,$$
for all $t$ sufficiently large, and
$$\left(M,\frac{\omega(t)}{t}\right)\rightarrow (Z,d), \quad \textrm{as } t \rightarrow \infty,$$
in the sense of Gromov-Hausdorff (namely, convergence  as metric spaces), where $(Z,d)$ denotes the metric completion of $(M \setminus S,\omega_{\rm KE})$.
\end{conj}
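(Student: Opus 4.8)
The plan is to analyze the \emph{normalized} flow $\hat\omega(t) := \omega(t)/t$ together with its potential. Since $K_M$ is nef we already have $T=\infty$, and by \cite{TW, G3} together with the continuity of the limiting potential from \cite{Ngu} we know $\hat\omega(t)\to\omega_{\rm KE}$ in $C^\infty_{\rm loc}(M\setminus S)$, where $\omega_{\rm KE}=\chi+\ddbar\varphi_\infty$ for a fixed smooth representative $\chi=-\Ric(\omega_0)$ of the nef and big class $c_1(K_M)$ and a continuous potential $\varphi_\infty$. First I would upgrade this to uniform control of the normalized potential: writing $\tilde\varphi=\varphi/t$, the scalar equation \eqref{vpe} gives $\hat\omega(t)=\chi+t^{-1}\omega_0+\ddbar\tilde\varphi$, and I would establish a uniform $C^0$ bound $\|\tilde\varphi\|_{L^\infty(M)}\le C$ with $\tilde\varphi\to\varphi_\infty$ uniformly on $M$. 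This controls the Monge--Amp\`ere measures $\hat\omega(t)^n$ globally and reduces the problem to a \emph{bounded} family of degenerate complex Monge--Amp\`ere metrics, to which elliptic techniques can be applied uniformly in $t$.

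The main analytic step, and the first genuine difficulty, is the uniform diameter bound $\diam(M,\hat\omega(t))\le C$. Since the curvature of $\hat\omega(t)$ is unbounded near $S$, comparison geometry is unavailable. Instead I would adapt the PDE-based diameter estimates for complex Monge--Amp\`ere equations developed by Guo--Phong--Song--Sturm and Guo--Song, which yield diameter bounds depending only on an entropy/volume bound for the Monge--Amp\`ere measure and a uniform Sobolev constant, and crucially \emph{not} on any curvature bound. The hypotheses $\mathrm{Kod}(M)=n$ and $K_M$ nef guarantee that $\chi$ is nef and big, so $\int_M\chi^n>0$ and the reference volume is nondegenerate, while the $C^0$ estimate from the first step supplies the entropy control. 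Two points require care: making all constants in these auxiliary Monge--Amp\`ere / Moser-iteration arguments \emph{uniform in} $t$ along the flow, and handling the non-K\"ahler term $t^{-1}\omega_0$, which is not closed but is $O(1/t)$, so that asymptotically the problem becomes K\"ahler and the above estimates apply after absorbing this error term.

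With a uniform diameter bound and the smooth interior convergence in hand, I would prove Gromov--Hausdorff convergence to $(Z,d)$ by comparing the distance functions $d_{\hat\omega(t)}$ on $M\setminus S$ with the limiting distance $d$ induced by $\omega_{\rm KE}$. The $C^\infty_{\rm loc}$ convergence already gives convergence of lengths of curves staying in compact subsets of $M\setminus S$, so the entire issue is the behavior across $S$. Here one must show that minimizing geodesics for $\hat\omega(t)$ can be perturbed off $S$ without increasing their length in the limit, and that tubular neighborhoods of $S$ have $\hat\omega(t)$-diameter that is uniformly controlled and shrinks to the image of $S$ in $Z$; together these give $d_{\hat\omega(t)}\to d$ and identify the completion $(Z,d)$ as the Gromov--Hausdorff limit of $(M,\hat\omega(t))$.

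I expect the behavior near the singular set $S$ to be the principal obstacle, which is precisely why the statement remains a conjecture. Near $S$ the metric degenerates and curvature is unbounded, and one lacks an \emph{a priori} description of how $S$ sits inside the limit space $Z$: whether it collapses to a lower-dimensional set, whether ``necks'' of definite size persist, and whether $Z$ is a smooth manifold away from a small singular set. In the surface case with $S$ a union of $(-2)$-curves this picture is understood (they contract to orbifold points), but in higher dimension $S$ is a general analytic subvariety on which $\omega_{\rm KE}$ carries only continuous potentials, and controlling the distance function there --- in particular ruling out unexpected distance contributions across $S$ --- is the crux of the problem.
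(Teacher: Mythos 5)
You should first note that the statement you set out to prove is precisely Conjecture \ref{mingen} of the paper: the paper offers no proof, and the result is open in the stated generality. It is known only when $\omega_0$ is K\"ahler, by Guo--Song--Weinkove \cite{GSW} and Tian--Zhang \cite{TZ2} for low dimensions and by Wang \cite{Wang} in general, and the paper's own suggestion is that for $n=2$ one might adapt \cite{GSW}, whose method does not lean as heavily on the identification of the Chern--Ricci form with the Riemannian Ricci curvature. Measured against that, your write-up is a research plan rather than a proof: each of its three steps is an announced intention, and the two decisive ones are left unresolved exactly where the known obstructions sit. For the diameter bound, the Guo--Phong--Song--Sturm-type estimates you invoke are proved for K\"ahler metrics of the form $\chi+\ddbar\varphi$ with $\chi$ closed; here the normalized flow metric is $\hat\omega(t)=t^{-1}\omega_0+\chi+\ddbar(\varphi/t)$ with $\chi=-\Ric(\omega_0)$ closed but $\omega_0$ not, and the non-closed term cannot simply be ``absorbed'': the energy, capacity, and comparison arguments underlying those estimates use closedness of the metric itself, and what you need is uniformity of all constants as $t\to\infty$, which is not implied by the term being $O(1/t)$ pointwise. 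The absence of such Hermitian tools (the paper highlights, for instance, that even the parabolic Schwarz lemma of \cite{ST,Ya} has no non-K\"ahler analogue) is precisely why this circle of results has not been carried over.

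Second, your final step --- identifying the Gromov--Hausdorff limit with the metric completion of $(M\setminus S,\omega_{\rm KE})$ --- is phrased as ``one must show that minimizing geodesics can be perturbed off $S$'' and that tubular neighborhoods of $S$ have controlled diameter, but no mechanism is proposed for either. Even in the K\"ahler case this is the hard core of \cite{GSW,TZ2,Wang}, requiring delicate quantitative estimates near $S$ (in \cite{GSW}, explicit control of the geometry of neighborhoods of the $(-2)$-curves), and in higher dimensions $S$ is a general analytic subvariety where $\omega_{\rm KE}$ has only continuous potentials \cite{Ngu}. Your closing paragraph concedes that the behavior near $S$ is ``the crux of the problem'' and that this is ``precisely why the statement remains a conjecture''; that concession is accurate, and it means the attempt should be assessed as a plausible strategy outline with genuine, named gaps, not as a proof.
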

These results were shown in the case when $\omega_0$ is K\"ahler and $n=2$ by Guo-Song-Weinkove \cite{GSW} and Tian-Zhang \cite{TZ2} independently. The Tian-Zhang result also applies when $n=3$, and the case of general $n$ was later settled by Wang \cite{Wang}.

Conjecture \ref{mingen} seems more approachable when $n=2$, where one could try to adapt the method of \cite{GSW}, which unlike \cite{TZ2,Wang} does not rely much on the coincidence of the Chern-Ricci form with the Riemannian Ricci curvature for K\"ahler metrics.

\subsection{$\textrm{Kod}(M)=1$.}  All these surfaces admit a surjective holomorphic map $f: M \rightarrow \Sigma$ onto a compact Riemann surface $\Sigma$, such that the generic fiber of $f$ is a torus (i.e. an elliptic curve, after choosing an origin).  These surfaces are called \emph{minimal properly elliptic surfaces}.  We are interested mainly in the case when $M$ does not admit any K\"ahler metric.  It turns out that these non-K\"ahler minimal properly elliptic surfaces are structurally quite simple:  they are either elliptic bundles over a Riemann surface $\Sigma$ (locally $f^{-1}(U)$ is biholomorphic to $U \times F$, for $U$ a small neighborhood in $\Sigma$ and $F$ a torus), or have a finite cover which is one.  Tosatti-Weinkove-Yang \cite{TWY} considered the Chern-Ricci flow on such manifolds.  As explained earlier, the canonical bundle $K_M$ is nef and so the Chern-Ricci flow exists for all time.

For simplicity of the discussion, assume $f:M \rightarrow \Sigma$ is a non-K\"ahler elliptic bundle over $\Sigma$, which is necessarily a Riemann surface of genus at least 2.   It was shown in \cite{TWY} that
 $$\frac{\omega(t)}{t} \rightarrow f^* \omega_{\Sigma}, \quad \textrm{as } t\rightarrow \infty,$$
 in the $C^0(M, g_0)$ topology where $\omega_{\Sigma}$ is the K\"ahler-Einstein metric on the Riemann surface $\Sigma$, satisfying
 $$\textrm{Ric}(\omega_{\Sigma})=-\omega_{\Sigma}.$$
This result  implies in particular that the diameter of each torus fiber tends to zero uniformly and $(M, \omega(t)/t)$ converges to $(S, \omega_{\Sigma})$ in the Gromov-Hausdorff sense.  One can also say more:  $\omega(t)$ restricted to each fiber $f^{-1}(y)$ converges in the $C^1$ topology to the \emph{semi-flat} metric $\omega_{\textrm{flat},y}$ on $f^{-1}(y)$.  The semi-flat metric $\omega_{\textrm{flat},y}$ is defined to be the unique flat metric on the fiber $f^{-1}(y)$ lying in the cohomology class $[\omega_0|_{f^{-1}(y)}]$ (note that $\omega_0$ is not a closed form, but its restriction to the $1$-dimensional fiber is). Very recently, Angella-Tosatti \cite{AT} improved this last convergence result, so that $\omega(t)|_{f^{-1}(y)}\to \omega_{\textrm{flat},y}$ smoothly on $f^{-1}(y)$.

For a general non-K\"ahler properly minimal elliptic surface, one can apply the above result to a finite cover and obtain, for example, Gromov-Hausdorff convergence of $(M, \omega(t)/t)$ to $(\Sigma, d_\Sigma)$ where now $d_\Sigma$ is now the distance function induced by the orbifold K\"ahler-Einstein metric $\omega_{\Sigma}$ on $\Sigma$.

Later, Kawamura \cite{Kaw} improved the convergence of $\omega(t)/t$ to $f^*\omega_\Sigma$ to $C^\alpha(M, g_0), 0<\alpha<1,$ in the case when  $\omega_0$ is of the form $\omega_{\rm TV}+\ddbar\psi$ for some $\psi\in C^\infty(M)$, where $\omega_{\rm TV}$ is an explicit Gauduchon metric on $M$ constructed by Tricerri \cite{Tr} and Vaisman \cite{Va}. Again very recently this was improved to smooth convergence by Angella-Tosatti \cite{AT}, who also proved that in this case the Chern Riemann curvature tensor of $\omega(t)/t$ remains uniformly bounded.

These results strongly suggest that the following should hold:
\begin{conj}\label{prop_nk} Let $f:M\to \Sigma$ be an elliptic bundle over a Riemann surface $\Sigma$ of genus at least $2$, and let $\omega(t),t\geq 0$ be the solution of the Chern-Ricci flow \eqref{CRF} on $M$ starting at an arbitrary Gauduchon metric $\omega_0$. Then
$$\frac{\omega(t)}{t}\to f^*\omega_\Sigma, \quad \textrm{as } t \rightarrow \infty,$$
smoothly on $M$ and $(M,\omega(t)/t)\to (\Sigma,d_\Sigma)$ in Gromov-Hausdorff. Furthermore, the Chern Riemann curvature tensor of $\frac{\omega(t)}{t}$ satisfies the uniform bound
$$\sup_M \left|\mathrm{Rm}\left(\frac{\omega(t)}{t}\right)\right|_{g(t)/t}\leq C,$$
for all $t$ sufficiently large. The same results hold for $f:M\to \Sigma$ a general non-K\"ahler minimal properly elliptic surface, up to passing to a finite cover which makes it an elliptic bundle.
\end{conj}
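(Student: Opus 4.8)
The plan is to reduce the flow to the scalar parabolic complex Monge-Amp\`ere equation \eqref{vpe}, as in Section \ref{sect_pde}, and to establish \emph{a priori} estimates that are uniform in time and, crucially, uniform over the choice of Gauduchon initial metric $\omega_0$. The first step is to set up reference forms adapted to the collapsing. Since $K_M$ is nef the flow exists for all time, and the Chern-Ricci form can be written as $-\Ric(\omega_0)=f^*\omega_\Sigma+\ddbar h$ for some $h\in C^\infty(M)$ (after normalizing $\omega_\Sigma$), using that the fibers are elliptic curves with trivial canonical bundle and that $H^2(\Sigma,\mathbb{R})\cong\mathbb{R}$. The natural model for the geometry of $\omega(t)$ is therefore $t\,f^*\omega_\Sigma+\omega_{\mathrm{SF}}$, where $\omega_{\mathrm{SF}}$ is the semi-flat metric restricting to $\omega_{\mathrm{flat},y}$ on each fiber $f^{-1}(y)$; after dividing by $t$ this tends to $f^*\omega_\Sigma$. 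The essential difference between a general Gauduchon $\omega_0$ and the special data $\omega_{\mathrm{TV}}+\ddbar\psi$ treated in \cite{Kaw, AT} is that $\omega_0-\omega_{\mathrm{SF}}$ need not be $\ddbar$-exact: it defines a fixed, possibly nonzero, Aeppli class on $M$. I would absorb this fixed class into the reference family $\hat\omega_t:=\omega_0-t\Ric(\omega_0)+\ddbar\rho_t$ by choosing $\rho_t$ so that $\hat\omega_t$ agrees with $t\,f^*\omega_\Sigma+\omega_{\mathrm{SF}}$ up to a $t$-independent bounded error concentrated in the fiber directions.

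With this reference fixed, the next step is a $C^0$ estimate for $\vp/t$, giving that $\omega(t)/t$ is uniformly comparable to $f^*\omega_\Sigma$ in the base directions while the fiber directions of $\omega(t)$ remain of order one, consistent with the fiber area $\int_{f^{-1}(y)}\omega(t)=\int_{f^{-1}(y)}\omega_0$ being constant in $t$. I would then prove a second-order metric estimate, bounding a suitably rescaled version of $\tr{\hat\omega_t}{\omega(t)}$ by the maximum principle; the curvature of the base enters with a favorable sign through $\Ric(\omega_\Sigma)=-\omega_\Sigma$, and the flatness of the fibers together with the local product structure $f^{-1}(U)\cong U\times F$ keeps the fiber contribution controlled. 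Higher-order parabolic Schauder estimates, bootstrapped from the metric bound and localized on each $U\times F$, then upgrade the $C^0$ convergence to smooth convergence $\omega(t)/t\to f^*\omega_\Sigma$. The Gromov-Hausdorff convergence $(M,\omega(t)/t)\to(\Sigma,d_\Sigma)$ follows formally from the convergence on the base, the uniform diameter bound, and the collapsing of the fibers to points; the statement for a general non-K\"ahler minimal properly elliptic surface is then obtained by running the argument on a finite cover that is an elliptic bundle and descending the equivariant estimates.

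I expect the main obstacle to be the uniform Chern Riemann curvature bound $\sup_M|\mathrm{Rm}(\omega(t)/t)|_{g(t)/t}\le C$. Under collapsing the curvature tends to concentrate in the directions being contracted, so controlling it requires splitting the full curvature tensor into base, fiber, and mixed components and showing each remains bounded after rescaling by $t$; this is exactly where the hypothesis that $f$ is an honest \emph{bundle}, with flat torus fibers and no singular or multiple fibers, is indispensable. The natural tool is a parabolic Calabi-type third-order estimate adapted to the collapsing, controlling $|\nabla g|$ relative to the model metric, but making it uniform over all Gauduchon $\omega_0$ is delicate: the fixed Aeppli class of $\omega_0-\omega_{\mathrm{SF}}$ introduces fiberwise fluctuations in the initial geometry that the flow must be shown to smooth out, so that $\omega(t)$ restricted to each fiber still converges to its flat representative. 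Establishing this decay of the fiberwise error, uniformly in $t$ and in $\omega_0$, is the crux of the problem and the point at which the methods of \cite{AT} for special initial data would need to be substantially strengthened.
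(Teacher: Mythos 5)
You are attempting to prove Conjecture \ref{prop_nk}, which is an \emph{open problem} in this paper: the authors prove nothing here, and explicitly state that the only known case is that of Angella--Tosatti \cite{AT}, for initial metrics of the special form $\omega_{\rm TV}+\ddbar\psi$, the general Gauduchon case remaining open. Your outline correctly reproduces the known skeleton: the reduction to the scalar equation of Section \ref{sect_pde}, the reference family modeled on $t\,f^*\omega_\Sigma+\omega_{\mathrm{SF}}$, the $C^0$ theory and the Gromov--Hausdorff collapse are essentially the content of Tosatti--Weinkove--Yang \cite{TWY}, which gives $\omega(t)/t\to f^*\omega_\Sigma$ in $C^0(M,g_0)$ for arbitrary Gauduchon $\omega_0$. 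But everything beyond $C^0$ --- smooth convergence and the uniform bound on the Chern curvature of $\omega(t)/t$ --- is precisely the new content of the conjecture, and your proposal does not supply it; indeed your final paragraph concedes that the crux is left unresolved.

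The concrete gap is the sentence claiming that ``higher-order parabolic Schauder estimates, bootstrapped from the metric bound and localized on each $U\times F$, then upgrade the $C^0$ convergence to smooth convergence.'' In the collapsing regime this is not a routine bootstrap. As explained in Section \ref{sectinfin}, one must first stretch the fiber coordinate by $\lambda_t(z_1,z_2)=(z_1e^{t/2},z_2)$ so that the localized, rescaled solutions $\omega_t(s)=\lambda_t^*p^*\omega(s+t)$ become uniformly comparable to a Euclidean metric, and then apply a \emph{local} higher-order estimate on a fixed ball. For this to close, the local estimate may depend only on the equivalence constant $A$ and not on the initial metric of the localized flow, because that ``initial metric'' is $\omega_t(-1)$, which varies with $t$ and is only controlled in $C^0$. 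This is exactly Question \ref{higherord}, which is open: the existing local estimates \cite{ShW2, Chu2, Nie2} all depend on $\omega_0$, and the identified obstruction is control of covariant derivatives of the torsion of $\omega(t)$. Your proposed ``parabolic Calabi-type third-order estimate adapted to the collapsing, uniform over all Gauduchon $\omega_0$'' is precisely this missing ingredient, and you offer no mechanism for obtaining torsion control independent of the initial data. A secondary soft spot: your second-order step invokes a Schwarz-lemma-type maximum principle computation in which the base curvature ``enters with a favorable sign,'' but the paper points out that the parabolic Schwarz lemma of \cite{ST,Ya} has not been generalized to the non-K\"ahler setting, and circumventing this was already a major concern in \cite{TWY}. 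In short, your proposal is a sensible research plan that tracks the known partial results, but it is not a proof, and the step you label as routine is exactly where the problem is open.
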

As we said, Conjecture \ref{prop_nk} was recently proved by Angella-Tosatti \cite{AT} when $\omega_0$ is of the form $\omega_{\rm TV}+\ddbar\psi$ for some $\psi\in C^\infty(M)$, but the general case remains open. See however Question \ref{higherord} below for a local higher order result that would imply Conjecture \ref{prop_nk}.

As discussed above, the non-K\"ahler minimal surfaces of Kodaira dimension one have a relatively simple structure.  In general, minimal properly elliptic K\"ahler surfaces  still admit an elliptic fibration $f:M\to\Sigma$ which may have singular/multiple fibers and this leads to substantial new difficulties when studying the long-time behavior of the flow, which are particularly acute when $\omega_0$ is not K\"ahler.  Furthermore, if we let $S\subset M$ be the union of all singular/multiple fibers, then the fibers $f^{-1}(y),y\in \Sigma\backslash f(S)$ are all tori, but unlike the previous case they are not mutually biholomorphic in general, so there is variation of the complex structure of the smooth fibers. This variation is encoded in a {\em Weil-Petersson form} $\omega_{\rm WP}\geq 0$ on $\Sigma\backslash f(S)$, and the limiting K\"ahler-Einstein metric above is now replaced by a K\"ahler metric $\omega_\Sigma$ on $\Sigma\backslash f(S)$ which satisfies the \emph{twisted K\"ahler-Einstein equation}
 \begin{equation} \label{tKE}
 \textrm{Ric}(\omega_{\Sigma})=-\omega_{\Sigma}+\omega_{\rm WP},
 \end{equation}
 and which was constructed by Song-Tian \cite{ST}, who first studied the K\"ahler-Ricci flow on elliptic surfaces.

The following is then the expected picture:

\begin{conj}\label{prop_k} Let $f:M\to \Sigma$ be a general K\"ahler minimal properly elliptic surface, and let $\omega(t),t\geq 0$ be the solution of the Chern-Ricci flow \eqref{CRF} on $M$ starting at an arbitrary Gauduchon metric $\omega_0$. Then
\begin{equation}\label{total}
\frac{\omega(t)}{t}\to f^*\omega_\Sigma, \quad \textrm{as } t\rightarrow \infty,
\end{equation}
locally smoothly on $M\backslash S$, for $\omega_{\Sigma}$ solving (\ref{tKE}), and the Chern Riemann curvature tensor of $\frac{\omega(t)}{t}$ remains locally uniformly bounded on compact subsets of $M\backslash S$.  Furthermore, for each fiber $f^{-1}(y),y\in \Sigma\backslash f(S)$ we have
\begin{equation}\label{fiber}
\omega(t)|_{f^{-1}(y)}\to \omega_{\textrm{flat},y},
\end{equation}
smoothly on $f^{-1}(y)$.
\end{conj}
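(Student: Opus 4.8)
The plan is to reduce the flow to the scalar parabolic complex Monge-Amp\`ere equation (\ref{vpe}) and then prove a priori estimates, localizing away from the singular set $S$. Since $M$ is minimal with $\mathrm{Kod}(M)=1$, the canonical bundle $K_M$ is nef, so by the characterization recalled above the flow exists for all time, and the reference forms are $\hat\omega_t:=\omega_0-t\,\mathrm{Ric}(\omega_0)$. First I would set up the cohomological framework via Kodaira's canonical bundle formula, which expresses $c_1(K_M)$ as $f^*\alpha$ plus a contribution supported on the singular/multiple fibers $S$; thus on $M\setminus S$ the forms $\hat\omega_t$ are, up to $\ddbar$ of a bounded function, modeled on $t\,f^*\omega_\Sigma+\omega_{\mathrm{SF}}$, where $\omega_{\mathrm{SF}}$ is a semi-flat form restricting to $\omega_{\mathrm{flat},y}$ on each smooth fiber and $\omega_\Sigma$ solves the twisted K\"ahler-Einstein equation (\ref{tKE}) of Song-Tian \cite{ST}. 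The Weil-Petersson term $\omega_{\mathrm{WP}}$ enters because the curvature of the direct image $f_*K_{M/\Sigma}$ computes the variation of the complex structures of the fibers, and this is precisely what forces the twisting in (\ref{tKE}); near $f(S)$ the metric $\omega_\Sigma$ acquires conical/cusp singularities coming from the multiple fibers.

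Next I would prove the a priori estimates in the following order. First, a zeroth-order estimate bounding $\varphi/t$ and the time derivative $\dot\varphi$ uniformly, using the maximum principle together with the explicit volume asymptotics $\mathrm{Vol}(M,\omega(t))=O(t)$, which follow from $K_M^2=0$ and the volume formula recalled above and reflect the expected collapsing. Second, an upper bound $\omega(t)\le C\,(t\,f^*\omega_\Sigma+\omega_{\mathrm{SF}})$ via a parabolic Schwarz-lemma / maximum-principle argument applied to $\mathrm{tr}_{\omega(t)}(f^*\omega_\Sigma)$ and to the fiber directions; here the non-K\"ahlerness of $\omega_0$ produces torsion terms that must be absorbed, and this is where the argument departs from the K\"ahler-Ricci flow treatment of \cite{ST}. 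Third, a local lower bound for $\omega(t)$ in the fiber directions on compact subsets of $M\setminus S$, yielding that $\omega(t)|_{f^{-1}(y)}$ stays uniformly equivalent to $\omega_{\mathrm{flat},y}$ there. Fourth, local second-order (Laplacian) and then higher-order estimates on compact subsets of $M\setminus S$, obtained by a Calabi/Evans-Krylov-type argument adapted to the collapsing, degenerating setting, as in the elliptic-bundle case of Angella-Tosatti \cite{AT}, but now only locally and after cutting off near $S$.

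With these estimates in hand, convergence follows: the local higher-order bounds give $C^\infty_{\mathrm{loc}}(M\setminus S)$ subsequential limits, the limit of $\omega(t)/t$ is identified with $f^*\omega_\Sigma$ by passing to the limit in the Monge-Amp\`ere equation and invoking uniqueness of the twisted K\"ahler-Einstein metric, and the fiberwise convergence (\ref{fiber}) to the semi-flat metric comes from restricting the estimates to each smooth fiber. The local uniform bound on the Chern Riemann curvature of $\omega(t)/t$ is then read off from the fourth step, completing the statement on compact subsets of $M\setminus S$.

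The hard part will be the third and fourth steps near the singular set $S$: one must show that no curvature or volume concentration propagates from the singular/multiple fibers into the smooth region, and that the cutoffs localizing the estimates can be chosen so that the torsion of the non-K\"ahler $\omega_0$ does not spoil the maximum principle. The combination of genuine collapsing (fibers shrinking while the base expands like $t$), the non-K\"ahler torsion terms, and the presence of singular fibers is exactly what makes this case substantially harder than both the non-K\"ahler elliptic-bundle case settled in \cite{AT} and the torsion-free K\"ahler-Ricci flow case of \cite{ST}. As the authors note, a purely local higher-order estimate of the type sought in Question \ref{higherord} would suffice to close this gap.
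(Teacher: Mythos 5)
There is a fundamental issue to flag before assessing the details: the statement you are trying to prove is Conjecture \ref{prop_k}, which the paper does \emph{not} prove and explicitly presents as open. The paper only records that the case of K\"ahler $\omega_0$ was settled in \cite{FZ, G2, HT, ToZ}, and that the Gauduchon non-K\"ahler case remains unresolved. So there is no ``paper proof'' to match your attempt against; what must be judged is whether your outline actually closes the gap the authors identify, and it does not.

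The concrete gap is your second step. You propose an upper bound $\omega(t)\le C\,(t\,f^*\omega_\Sigma+\omega_{\mathrm{SF}})$ ``via a parabolic Schwarz-lemma / maximum-principle argument applied to $\mathrm{tr}_{\omega(t)}(f^*\omega_\Sigma)$,'' conceding that ``the non-K\"ahlerness of $\omega_0$ produces torsion terms that must be absorbed.'' Saying the torsion terms ``must be absorbed'' is not an argument: the paper states precisely that the parabolic Schwarz Lemma of \cite{ST,Ya} \emph{has not been generalized} to the non-K\"ahler case, and that getting around this was already a major concern in the elliptic-bundle case \cite{TWY} (where the authors had to exploit the bundle structure rather than a Schwarz lemma). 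In the evolution inequality for $\log\mathrm{tr}_{\omega(t)}(f^*\omega_\Sigma)$ the Chern connection's torsion produces terms that are not dominated by the good negative terms, and no mechanism for controlling them is known; your proposal supplies none. The same problem infects your third and fourth steps: the local higher-order estimates you invoke, uniform in the collapsing regime and \emph{independent of the initial metric's torsion}, are exactly the content of Question \ref{higherord}, which is open --- the existing local estimates \cite{ShW2, Chu2, Nie2} all depend on $\omega_0$, which is not allowed here. (Your final observation that such a local estimate ``would suffice'' is consistent with the paper, which notes that Question \ref{higherord} would yield Conjecture \ref{prop_nk}; but even granting it, you would still need the $C^0$ bounds of your second step, i.e.\ the Schwarz lemma, to get started, and the paper's remark that $C^0_{\rm loc}$ versions of \eqref{total} and \eqref{fiber} upgrade to smooth convergence in \eqref{fiber} via \cite[Remark 6.2]{AT} presupposes exactly what is missing.) In short, your write-up is a correct roadmap of the known strategy from the K\"ahler case together with an accurate list of where it breaks, but each point where it breaks is left unresolved, so this is a research program rather than a proof.
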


Analogously with Conjecture \ref{mingen} one also expects:
\begin{conj}\label{prop_k_gh}
In the same setting as Conjecture \ref{prop_k}, we have
$$\mathrm{diam}\left(M,\frac{\omega(t)}{t}\right)\leq C,$$
for all $t$ sufficiently large, and
$$\left(M,\frac{\omega(t)}{t}\right)\rightarrow (Z,d), \quad \textrm{as } t \rightarrow \infty,$$
in the sense of Gromov-Hausdorff, where $(Z,d)$ denotes the metric completion of $(\Sigma\setminus f(S),\omega_{\Sigma})$.
\end{conj}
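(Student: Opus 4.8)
The plan is to assume Conjecture~\ref{prop_k} and to upgrade the local smooth convergence on $M\setminus S$, together with the fiberwise collapse \eqref{fiber}, to a global statement about $(M,\omega(t)/t)$ as a metric space. Since $K_M$ is nef the flow exists for all $t$, and the metrics $\omega(t)/t$ all live on the fixed compact manifold $M$. The guiding picture is that $f$ should become, as $t\to\infty$, a collapsing Riemannian submersion onto $(\Sigma\setminus f(S),\omega_\Sigma)$: the generic fibers shrink to points while the base geometry converges to $\omega_\Sigma$. Concretely, I would try to show that $f\colon (M,\omega(t)/t)\to (Z,d)$ is an $\ve_t$-Gromov-Hausdorff approximation with $\ve_t\to 0$. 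This requires three ingredients: (i) the fibers collapse, i.e. $\diam(f^{-1}(y),\omega(t)/t)\to 0$; (ii) a uniform upper bound $\diam(M,\omega(t)/t)\le C$; and (iii) that horizontal distances in $(M,\omega(t)/t)$ are asymptotically computed by the base metric $\omega_\Sigma$.

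Ingredient (i) is essentially immediate from Conjecture~\ref{prop_k}: by \eqref{fiber} the metric $\omega(t)|_{f^{-1}(y)}$ stays bounded (converging to $\omega_{\textrm{flat},y}$), so after dividing by $t$ the intrinsic fiber diameter is $O(t^{-1/2})\to 0$, at least for $y$ in a fixed compact subset of $\Sigma\setminus f(S)$. For ingredient (iii), on compact subsets $K\Subset \Sigma\setminus f(S)$ the local smooth convergence $\omega(t)/t\to f^*\omega_\Sigma$ and the uniform curvature bound let one build bi-Lipschitz $\ve_t$-isometries between $(f^{-1}(K),\omega(t)/t)$ and a neighborhood of $K$ in $(\Sigma,\omega_\Sigma)$, using the fibration to split into a collapsing vertical part and a converging horizontal part. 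Both of these pieces are standard once Conjecture~\ref{prop_k} is granted.

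The real difficulty, and the source of ingredient (ii), is the behavior of the flow over a neighborhood of the singular/multiple fibers $S$, where Conjecture~\ref{prop_k} provides no control (its curvature bound is only local on $M\setminus S$, and $\omega_\Sigma$ itself degenerates as one approaches $f(S)$). The plan would be to fix a small disk $D_\delta\subset\Sigma$ around each point of $f(S)$ and to bound, uniformly in $t$, the distance in $(M,\omega(t)/t)$ from $f^{-1}(\partial D_\delta)$ to the singular fiber $f^{-1}(0)$, as well as the extrinsic diameter of $f^{-1}(\overline{D_\delta})$ itself. For this I would combine several tools: a parabolic Schwarz lemma to obtain an upper bound on $\tr{\omega(t)}{f^*\omega_\Sigma}$ and hence control the horizontal stretching of $f$; the volume formulas of Section~\ref{sectsurf1}, which show that $\vol(f^{-1}(\overline{D_\delta}),\omega(t)/t)$ is uniformly small as $\delta\to 0$; and Kodaira's explicit classification of singular fibers, which provides local holomorphic models ($I_m$, $I_0^*$, multiple fibers, and so on) on which one can construct explicit barrier metrics that are pulled back from the base away from the singular set. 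A maximum-principle argument against such a barrier should then yield a uniform bound $\omega(t)/t\le C\,\omega_{\rm bar}$ sufficient to control distances across $f^{-1}(\overline{D_\delta})$. In parallel one must verify that $(\Sigma\setminus f(S),\omega_\Sigma)$ has finite-diameter metric completion, so that $(Z,d)$ is compact; this again reduces to the local structure of the solution $\omega_\Sigma$ of the twisted K\"ahler-Einstein equation \eqref{tKE} near $f(S)$, where the Weil-Petersson form $\omega_{\rm WP}$ may blow up.

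I expect the estimates near $S$ to be the main obstacle. In the elliptic bundle case of Conjecture~\ref{prop_nk} there are no singular fibers, so this entire analysis is absent; here, by contrast, the combination of collapsing fibers, degenerating base metric, and the absence of a K\"ahler structure on $\omega_0$ makes uniform distance control near the singular fibers genuinely delicate. Once the uniform diameter bound (ii) is in place, the final step should be routine: one patches the $\ve_t$-isometries over $\Sigma\setminus f(S)$ with the (uniformly small in diameter) preimages of neighborhoods of $f(S)$ to produce a global $\ve_t$-Gromov-Hausdorff approximation $(M,\omega(t)/t)\to (Z,d)$ with $\ve_t\to 0$, which is the assertion of the conjecture.
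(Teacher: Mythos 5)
This statement is one of the paper's open conjectures, not a theorem: the paper offers no proof of Conjecture~\ref{prop_k_gh}, and explicitly notes that it is known only when $\omega_0$ is K\"ahler, where it is the main result of Song-Tian-Zhang \cite{STZ}. So there is no argument in the paper to compare yours against; the only question is whether your proposal actually closes the problem, and it does not. What you have written is a strategy outline faithful to the K\"ahler-case blueprint (fibration becomes a collapsing approximation, patch local $\ve_t$-isometries, control the region near the singular fibers), but every step you correctly identify as decisive is deferred rather than proved.

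Two gaps are concrete. First, your argument is conditional on the conclusion of Conjecture~\ref{prop_k}, which is itself open for non-K\"ahler Gauduchon $\omega_0$; the phrase ``in the same setting'' in Conjecture~\ref{prop_k_gh} refers only to the hypotheses (K\"ahler minimal properly elliptic surface, arbitrary Gauduchon initial metric), not to the conclusions of Conjecture~\ref{prop_k}, so at best you would obtain a conditional implication, not the conjecture. Second, and more seriously, your ingredient (ii) --- uniform distance control across $f^{-1}(\overline{D_\delta})$ --- leans on ``a parabolic Schwarz lemma to obtain an upper bound on $\tr{\omega(t)}{f^*\omega_\Sigma}$,'' but this is precisely the tool the paper singles out as unavailable: the parabolic Schwarz lemma of \cite{ST,Ya} has not been generalized to the non-K\"ahler case, because the torsion of $\omega(t)$ enters the evolution inequality for $\log\tr{\omega(t)}{f^*\omega_\Sigma}$ with no favorable sign, and getting around this was already a major concern in \cite{TWY} even for elliptic bundles with \emph{no} singular fibers. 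Listing further tools (Kodaira's local models, barrier metrics, volume asymptotics, the maximum principle) does not substitute for the estimate; in the K\"ahler case this step is the content of the substantial paper \cite{STZ}, not a routine patching argument. You would also need to prove, not merely ``verify,'' that the completion $(Z,d)$ of $(\Sigma\setminus f(S),\omega_\Sigma)$ is compact, i.e.\ that the solution of \eqref{tKE} has finite diameter near $f(S)$ where $\omega_{\rm WP}$ may blow up; together with the uniform fiberwise statement (your \eqref{fiber} only controls fibers over compact subsets of $\Sigma\setminus f(S)$), these are exactly the open issues, and the proposal leaves all of them open.
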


When $\omega_0$ is K\"ahler, Conjectures \ref{prop_nk} and \ref{prop_k} were settled by \cite{FZ, G2, HT, ToZ}, see also \cite[\S 5.14]{ToZ}, and Conjecture \ref{prop_k_gh} was proved in \cite{STZ}. Thus, it remains to understand what happens when the initial metric $\omega_0$ is Gauduchon and non-K\"ahler. One of the main obstacles to following the arguments in the K\"ahler case is that so far the ``parabolic Schwarz Lemma'' from \cite{ST,Ya} has not been generalized to the non-K\"ahler case (and getting around this issue was also a major concern in \cite{TWY}). Observe also that once \eqref{total} and \eqref{fiber} have been established in $C^0_{\rm loc},$ then smooth convergence in \eqref{fiber} will follow using the method in \cite[Remark 6.2]{AT}.

\subsection{$\textrm{Kod}(M)=0$.} \label{sectionKod0} These surfaces were classified by Kodaira (see \cite{bhpv}), and are either K\"ahler Calabi-Yau surfaces (i.e. $c_1(M)=0$ in $H^2(M,\mathbb{R})$, which are tori, $K3$, Enriques and bielliptic surfaces) or non-K\"ahler (which are primary and secondary Kodaira surfaces). It follows from this that
all such $M$ have $K_M$ holomorphically torsion, which implies that $\Ric(\omega_0)$ is globally $\de\db$-exact. This is a condition known as vanishing first Bott-Chern class (see e.g. \cite{Tos}), which when $M$ is K\"ahler is equivalent to $M$ being Calabi-Yau.

As mentioned in Section \ref{sect_pde}, the behavior of the Chern-Ricci flow on all these manifolds (and in arbitrary dimension) was  dealt with in Gill's first paper on the topic \cite{G}.  Starting at any Hermitian metric $\omega_0$ (the Gauduchon condition is not needed here), the solution of the Chern-Ricci flow exists for all time and converges smoothly to a Chern-Ricci flat metric.  As mentioned earlier, this is the analogue of the result of Cao \cite{Cao} for the K\"ahler-Ricci flow which gave a parabolic approach to Yau's theorem on the existence of K\"ahler Ricci-flat (or \emph{Calabi-Yau}) metrics \cite{Y}.

The rate of smooth convergence is exponentially fast, at least when $\dim M=2$ and $\omega_0$ is Gauduchon. As far as we know, this is not written down explicitly in the literature, but one can adapt the proof in the K\"ahler case which is sketched in \cite[Proof of Theorem 1.5]{ToZ}, using the generalization of the Mabuchi energy from \cite[\S 9]{TW3}.

\subsection{$\textrm{Kod}(M)=-\infty$.} This is the most difficult case, and remains largely open.  By Kodaira's classification \cite{bhpv}, minimal surfaces with negative Kodaira dimension are either $\mathbb{P}^2$, ruled, or of Class VII$_0$ (which means minimal surfaces with $\mathrm{Kod}(M)=-\infty$ and $b_1(M)=1)$. We divide the discussion into four parts.

\bigskip
\noindent
\emph{1. Class VII$_0$ with $b_2=0$: Hopf surfaces.} \medskip \label{sectionhopf}

 \ As we just said, minimal non-K\"ahler surfaces with $\textrm{Kod}(M)=-\infty$ are Class VII$_0$ surfaces, a name that comes from Kodaira's original classification of surfaces (the subscript $0$ indicates minimality).   If $b_2(M)=0$ then these Class VII$_0$ surfaces are classified as either \emph{Hopf surfaces} or \emph{Inoue surfaces} by \cite{Bo,In,LYZ,T0}.

We begin by discussing Hopf surfaces which are easier to describe explicitly but in fact pose a harder challenge in the context of the Chern-Ricci flow. By definition, a compact complex surface is called Hopf if its universal cover is biholomorphic to $\mathbb{C}^2\backslash\{0\}$, and it is automatically minimal. For $(\alpha, \beta) \in \mathbb{C}^2 \setminus \{ 0\}$ with $|\alpha|=|\beta|$ consider the Hopf surface
$$M_{\alpha, \beta} = (\mathbb{C}^2 \setminus \{0 \})/\sim$$ where
$$(z_1, z_2) \sim (\alpha z_1, \beta z_2).$$
The manifold $M_{\alpha, \beta}$ is diffeomorphic to $S^1 \times S^3$ via the map
\begin{equation} \label{map}
\begin{split}
(z_1, z_2) \mapsto & \left( |z_1|^2+|z_2|^2, \frac{(z_1, z_2)}{\sqrt{|z_1|^2+|z_2|^2}} \right)  \\ {} & \in ( \mathbb{R}^+/ (x \sim |\alpha| x) ) \times S^3 \cong S^1\times S^3,
\end{split}
\end{equation}
and hence has vanishing second Betti number (which implies it cannot admit a K\"ahler metric).

We can write down an explicit solution $\omega(t)$ of the Chern-Ricci flow  on $M_{\alpha, \beta}$ starting at the standard Hopf metric
\begin{equation} \label{standard}
\omega_0 = \omega_H:= \frac{\delta_{ij}}{|z_1|^2+|z_2|^2} \sqrt{-1} dz_i \wedge dz_j,
\end{equation}
which via (\ref{map}) is just a standard product metric on $S^1 \times S^3$.  The metric $\omega_H$ is Gauduchon.
We have  \cite{TW}
\[
\begin{split}
\omega(t) = {} & \omega_0 - t \textrm{Ric}(\omega_0) \\
= {} &   \frac{1}{|z_1|^2+|z_2|^2} \left( (1-2t) \delta_{ij} + 2t \frac{\ov{z}_i z_j}{|z_1|^2+|z_2|^2} \right) \sqrt{-1} dz_i \wedge d\ov{z}_j,
\end{split} \]
for $t \in [0,1/2)$ (this also follows from Section \ref{sectex} since $(M,\omega_H)$ is locally homogeneous).
It was shown in \cite{TW3} that as $t\rightarrow 1/2$ we have convergence $(M_{\alpha, \beta}) \rightarrow (S^1, d)$ in the Gromov-Hausdorff sense where $d$ is the distance function on the circle $S^1 \subset \mathbb{R}^2$ with radius  $\frac{\log |\alpha|}{\pi \sqrt{2}}$.  Observe that this is in stark contrast with what happens when $\omega_0$ is K\"ahler, when it is conjectured that all (diameter-normalized) Gromov-Hausdorff limits of the K\"ahler-Ricci flow on compact K\"ahler manifolds are homeomorphic to compact complex analytic varieties (see e.g. \cite{ST4}).

 Interestingly, it is \emph{not} the case that the evolving metrics remain a product of metrics on $S^1 \times S^3$, with the $S^3$ factor scaling to zero as $t$ approaches the singular time, as would be the case for the Ricci flow starting at $g_0$.  Rather, the metrics $\omega(t)$ converge to a non-closed semi-positive $(1,1)$ form whose kernel defines a distribution in the tangent space of $S^3$.  The iterated Lie brackets of this distribution generate the entire tangent space, and this suffices to prove the Gromov-Hausdorff collapse to $S^1$ \cite{TW3}.

A natural conjecture is that the same behavior will occur starting at any initial metric $\omega_0$.  It is known that  as $t$ tends to its maximal existence time $T$, which is finite, the volume $\textrm{Vol}(M_{\alpha, \beta},\omega(t))$ tends to zero \cite{TW}.  If $\omega_0$ is of the form $\omega_H +\ddbar \psi$ for a smooth real function $\psi$, then it was shown in \cite{TW} that there is a uniform bound $|\omega(t)|_{\omega_0} \le C$ on the evolving metric, and this implies sequential convergence at the level of potentials $\varphi(t)$ in $C^{1, \gamma}$ for any $0<\gamma<1$.  However these results are  far from establishing the conjecture.

Edwards \cite{E} considered a more general type of Hopf surface, where the condition $|\alpha|=|\beta|$ is replaced by $0< |\alpha| \le |\beta|$.  In this case the formula (\ref{standard}) no longer defines a metric on $M_{\alpha, \beta}$.  Instead Edwards  considered initial data given by a complicated but explicit metric constructed by Gauduchon-Ornea \cite{GO}, and  established again the estimate $|\omega(t)|_{\omega_0} \le C$ for the evolving metrics.

The above surfaces $M_{\alpha, \beta}$ are known as \emph{primary Hopf surfaces of Class 1}.  There are also  primary Hopf surfaces of Class 0, which are quotients of $\mathbb{C}^2 \setminus \{ 0 \}$ by the relation $(z_1, z_2) \sim (\beta^m z_1+ \lambda z_2^m, \beta z_2)$ for $m$ a positive integer and $\beta, \lambda \in \mathbb{C}$ with $|\beta|>1$ and $\lambda \neq 0$.  There are also \emph{secondary} Hopf surfaces which have a primary Hopf surface as a finite cover.  As far as we are aware, the only result known about these other Hopf surfaces is that the volume tends to zero as $t \rightarrow T<\infty$ \cite{TW}.

\pagebreak[3]
\bigskip
\noindent
\emph{2. Class VII$_0$ with $b_2=0$: Inoue surfaces.} \  \medskip

We now discuss \emph{Inoue surfaces} which can be characterized as those Class VII$_0$ surfaces with $b_2=0$ and no holomorphic curves (Hopf surfaces always contain at least one holomorphic curve).  The Inoue surfaces have been constructed and classified by Inoue \cite{In}.
They are all quotients of $\mathbb{C}\times\mathbb{H}$ (where $\mathbb{H}$ denotes the upper half plane in $\mathbb{C}$) by certain explicit discrete groups, and we refer the reader to \cite{In} or the exposition in \cite{TW3}, and point out just some of the basic facts that we need.

It turns out that the behavior of the Chern-Ricci flow on Inoue surfaces is much better understood than for Hopf surfaces. All Inoue surfaces have nef canonical bundle, and hence the flow exists for all time. As $t \rightarrow \infty$, examples for all Inoue surfaces indicate that, normalized appropriately, the evolving metrics should always collapse the manifold to a circle \cite{TW3}.  We describe now more precisely what is known.

On the universal cover $\mathbb{C}\times\mathbb{H}$ of any Inoue surface $M$, using coordinates $z_1, z_2$ on $\mathbb{C}$ and $\mathbb{H}$ respectively, the Poincar\'e metric
$$\frac{\sqrt{-1}}{(\textrm{Im}z_2)^2} dz_2 \wedge d\ov{z}_2$$ on $\mathbb{H}$ descends to a closed nonnegative real $(1,1)$ form on $M$.   A constant multiple of this  form defines a closed real $(1,1)$ form $\omega_{\infty}$ with
$$0 \le \omega_{\infty} \in c_1(K_M).$$
It was shown by Fang-Tosatti-Weinkove-Zheng \cite{FTWZ} that given any Hermitian metric $\omega_0$ there exists a metric $\tilde{\omega}_0$ in its conformal class so that if $\omega(t)$ is the solution of the Chern-Ricci flow starting at $\tilde{\omega}_0$ then $\omega(t)/t \rightarrow \omega_{\infty}$ uniformly on $M$ and exponentially fast (in fact one can take any initial metric in the $\partial \ov{\partial}$-class of $\tilde{\omega}_0$).  Moreover, $(M, \omega(t)/t) \rightarrow (S^1, d)$ in the sense of Gromov-Hausdorff, where $d$ is the standard distance function on the circle $S^1$ of radius depending on $M$. Very recently, Angella-Tosatti \cite{AT} improved this and obtained the same conclusion for the Chern-Ricci flow starting at any Gauduchon metric on $M$.

Stronger estimates were obtained in \cite{FTWZ} and \cite{AT} under the assumption that the initial metric is in the $\partial\ov{\partial}$-class of a special metric constructed by Tricerri \cite{Tr} and Vaisman \cite{Va} (depending on the type of Inoue surface).  In this case it was shown in \cite{FTWZ} that the metrics $\omega(t)/t$ are uniformly bounded in the $C^1$ topology, giving convergence in $C^{\gamma}$ for any $0<\gamma<1$, and in \cite{AT} this was improved to smooth convergence, together with a uniform bound for the Chern Riemann curvature of $\omega(t)/t$.

To complete the picture for Inoue surfaces, one would expect the following conjectures to hold (cf. \cite[Conjectures 4.1, 4.2]{FTWZ} and \cite{AT}):
\pagebreak[3]
\begin{conj}\label{inoue} Let $M$ be any Inoue surface.
\begin{enumerate}
\item[(1)] The uniform convergence $\omega(t)/t \rightarrow \omega_{\infty}$  holds for the Chern-Ricci flow starting at any Hermitian metric $\omega_0$, not necessarily Gauduchon and without any initial conformal change.
\item[(2)] The convergence $\omega(t)/t \rightarrow \omega_{\infty}$ is in the $C^{\infty}$ topology.
\item[(3)] We have that $|\mathrm{Rm}(\omega(t)/t)|_{g(t)/t}\leq C$ on $M$, for all $t$ sufficiently large.
\end{enumerate}
\end{conj}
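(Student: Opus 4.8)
\noindent\emph{Towards a proof.} The plan is to run the standard program for collapsing Chern-Ricci flows: reduce \eqref{CRF} to the scalar parabolic complex Monge-Amp\`ere equation \eqref{vpe} and establish a priori estimates of increasing order, the two genuinely new ingredients being the removal of the Gauduchon hypothesis (for (1)) and the upgrade of the convergence from the $\dbar$-class of $\omega_{\rm TV}$ to an arbitrary initial metric (for (2) and (3)). Throughout I would work equivariantly on the universal cover $\mathbb{C}\times\mathbb{H}$, using the explicit solvmanifold description of the Inoue surface, since the collapse $(M,\omega(t)/t)\to(S^1,d)$ drives the injectivity radius on $M$ to zero and makes the usual local smoothing estimates unavailable downstairs.

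For the setup, since both $\omega_\infty$ and $-\Ric(\omega_0)$ represent $c_1(K_M)$ one writes $-\Ric(\omega_0)=\omega_\infty+\ddbar\rho$ for a fixed smooth $\rho$, so that along \eqref{vpe}
\[
\frac{\omega(t)}{t}=\omega_\infty+\frac{\omega_0}{t}+\ddbar\!\left(\rho+\frac{\vp}{t}\right).
\]
Because $K_M^2=0$ on an Inoue surface the $t^2$ term in the volume drops out and the total volume grows only linearly, so one expects $\frac{\de\vp}{\de t}=\log t+O(1)$ and hence $\vp/t=\log t+O(1)$ with bounded spatial oscillation. The zeroth-order step is to bound $\frac{\de\vp}{\de t}-\log t$ and $\mathrm{osc}_M(\vp/t)$ by the maximum principle applied to \eqref{vpe} and to its time derivative, using $\omega_\infty$ and $\omega_0$ as barriers; the limit is then forced to be $\omega_\infty$, the descent of the Poincar\'e metric. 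To obtain (1) for an arbitrary Hermitian $\omega_0$, the key observation is that the torsion of the evolving metric is \emph{constant} along the flow: since the Chern-Ricci form is $d$-closed and $\de\ddbar\vp=0$, one has $\de\omega(t)=\de\omega_0$, which is fixed and hence $O(1/t)$ after normalization. I would therefore revisit the estimates of \cite{FTWZ,AT}, isolate the two uses of $\ddb\omega_0=0$ (the cohomological volume bookkeeping and an integration by parts absorbing $\de\omega_0$), and replace them by the pointwise barrier arguments above, so that the uniform convergence survives with no Gauduchon assumption and no conformal change.

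The substance of the conjecture is (2) and (3), which require uniform second-order and curvature bounds for general initial data. Lifting to the cover, I would first prove local uniform bounds on $\omega(t)$ and its covariant derivatives in a fixed gauge adapted to the splitting $\mathbb{C}\times\mathbb{H}$, and then run a Shi-type evolution inequality for the Chern Riemann curvature of the unnormalized flow. Since $|\mathrm{Rm}(\omega(t)/t)|_{g(t)/t}=t\,|\mathrm{Rm}(\omega(t))|_{g(t)}$, the estimate in (3) is exactly the decay $|\mathrm{Rm}(\omega(t))|_{g(t)}\le C/t$; once this and the corresponding derivative bounds are in hand, parabolic bootstrapping promotes the $C^0$ convergence of $\omega(t)/t$ to $C^\infty$, giving (2).

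The main obstacle I expect is precisely this uniform curvature decay under collapse. The torsion of a general $\omega_0$ couples the collapsing $\mathbb{C}$-direction to the stable $\mathbb{H}$-direction, and in the reaction term governing the evolution of $|\mathrm{Rm}|^2$ this coupling produces contributions that are not manifestly $O(1/t)$; controlling them appears to need an \emph{anisotropic} estimate tracking the two directions separately rather than a single scalar bound. This is exactly the structure that $\omega_{\rm TV}$ diagonalizes by construction, which is why \cite{AT} could treat its $\dbar$-class but not the general case. A complete proof would thus hinge on either an anisotropic Shi-type estimate adapted to $\mathbb{C}\times\mathbb{H}$, or a blow-up argument excluding curvature concentration, the latter being delicate because the collapsed limit $(S^1,d)$ is singular.
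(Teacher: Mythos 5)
You have not proved this statement, and neither does the paper: the statement is Conjecture \ref{inoue}, which the paper explicitly leaves open. What is known is exactly what the paper records: uniform convergence $\omega(t)/t\to\omega_\infty$ for arbitrary Hermitian $\omega_0$ \emph{after} a conformal change \cite{FTWZ}, for arbitrary Gauduchon $\omega_0$ without conformal change \cite{AT}, and the stronger conclusions (2), (3) only when $\omega_0$ lies in the $\partial\ov{\partial}$-class of the Tricerri--Vaisman metric \cite{FTWZ,AT}. So there is no paper proof to compare against; the only question is whether your program closes the gap, and it does not --- indeed your last paragraph concedes that the decisive estimate is missing.

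On the positive side, several of your reductions are correct and match the strategy the paper itself outlines in Section \ref{sectinfin}: the identity $-\Ric(\omega_0)=\omega_\infty+\ddbar\rho$ holds for \emph{every} Hermitian $\omega_0$ (any two Chern-Ricci forms are $\ddbar$-cohomologous), the torsion identity $d\omega(t)=d\omega_0$ is valid, and lifting to $\mathbb{C}\times\mathbb{H}$ with stretching maps is precisely how the paper proposes to localize the problem. But the two genuinely new ingredients you need are not supplied. For part (1): since the Bott-Chern class of $-\Ric(\omega_0)$ is conformally invariant, your bookkeeping observation was already available to \cite{FTWZ} and yet they still required the conformal change; the Gauduchon/pluriclosed condition $\ddb\omega_0=0$ does not enter their estimates in merely ``two places'' but pervades the integral arguments (volume computations, oscillation bounds for $\vp$, the second-order estimate), and asserting that these can be ``replaced by pointwise barrier arguments'' is a claim, not an argument. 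For parts (2) and (3): your plan to ``run a Shi-type evolution inequality for the Chern Riemann curvature'' is exactly the step that is known to be unavailable. The existing local Calabi/curvature estimates for the Chern-Ricci flow \cite{ShW2,Chu2,Nie2} carry constants depending on the initial metric, precisely because no one can control covariant derivatives of the torsion of $\omega(t)$ in terms of the metric-equivalence constant $A$ alone; this is the content of Question \ref{higherord}, which the paper identifies as the missing ingredient and notes may even be false. Your closing alternative --- ``an anisotropic Shi-type estimate adapted to $\mathbb{C}\times\mathbb{H}$, or a blow-up argument excluding curvature concentration'' --- is a restatement of that open problem, not a solution to it.
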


It would of course be already interesting to settle parts (2) and (3) under the assumption that $\omega_0$ is Gauduchon.\\

\noindent
\emph{3. Class VII$_0$ with $b_2\ge 1$.} \ \medskip

\ Next we discuss the Class VII$_0$ surfaces with $b_2\ge 1$.   These surfaces remain in general unclassified.  The most well-known open problem in the study of complex surfaces is to  complete this classification.  The \emph{Global Spherical Shell Conjecture} asserts that all such manifolds contain a smooth 3-sphere with a neighborhood biholomorphic to a neighborhood of $S^3 \subset \mathbb{C}^2$, whose complement is connected \cite{Ka}.   Class VII$_0$ surfaces with $b_2 \ge 1$ which admit such a ``global spherical shell'' are known as Kato surfaces and are well-understood (see e.g. \cite{Na}). Teleman has proved the Global Spherical Shell Conjecture when $b_2=1$ \cite{T1} and made further progress when $b_2=2,3$ \cite{T3,T4}.

Little is known about the behavior of the Chern-Ricci flow on Class VII$_0$ surfaces with $b_2\geq 1$, except that the singular time $T$ is finite and that $\textrm{Vol}(M,\omega(t)) \rightarrow 0$ as $t\rightarrow T$ \cite{TW}.  A rather optimistic conjecture \cite[Conjecture 4.6]{FTWZ} is that the limit of the Chern-Ricci flow defines a global spherical shell in the following sense.  As $t \rightarrow T$, the metrics $\omega(t)$ should smoothly converge to a non-closed nonnegative $(1,1)$ form $a$.  The tangent distribution defined by the kernel of $a$ gives, after taking iterated Lie brackets, an integrable 3-dimensional distribution, one leaf of which should be a global spherical shell.   The only evidence in support of this conjecture is that this is exactly what happens for the explicit solutions of the Chern-Ricci flow on the standard Hopf surface as described above, which is indeed a minimal Class VII$_0$ surface with $b_2=0$ which contains a global spherical shell (the image of the unit sphere $S^3\subset\mathbb{C}^2\backslash\{0\}$ under the projection map to the Hopf surface).

   \bigskip
   \noindent
\emph{4. $\mathbb{P}^2$ and ruled surfaces.} \medskip

If a minimal complex surface $M$ with $\textrm{Kod}(M)=-\infty$ admits a K\"ahler metric then it must be either $\mathbb{P}^2$ or a ruled surface.  The behavior of the Chern-Ricci flow on these surfaces, starting with a non-K\"ahler metric, is largely unknown.

In the case of $\mathbb{P}^2$, assuming $\omega_0$ is Gauduchon, the volume $\textrm{Vol}(M,\omega(t))$ shrinks to zero as $t \rightarrow T<\infty$ \cite{TW}.  Interestingly, the volume tends to zero like $(T-t)$, unlike the case of the K\"ahler-Ricci flow which shrinks to a point (a consequence of work of Perelman, see \cite{SeT}) and whose volume tends to zero like $(T-t)^2$.  This suggests that, in contrast, the Chern-Ricci flow might collapse  $\mathbb{P}^2$ to a Riemann surface \cite[Conjecture 4.5]{FTWZ}.

Indeed, since $\mathbb{P}^2$ does not contain any curve with negative self intersection, \cite[Corollary 1.4]{TW} gives that
$T$ is exactly the first time when $\textrm{Vol}(M,\omega(t))$ shrinks to zero, which is the smallest positive root of the quadratic equation
\[
\begin{split}
0= {} & \int_{\mathbb{P}^2}(\omega_0-t\Ric(\omega_0))^2 \\
= {} & \int_{\mathbb{P}^2}(\omega_0-3t\omega_{\rm FS})^2 \\
= {} & =9t^2\int_{\mathbb{P}^2}\omega_{\rm FS}^2-6
t\int_{\mathbb{P}^2}\omega_0\wedge\omega_{\rm FS}+\int_{\mathbb{P}^2}\omega_0^2,
\end{split} \]
using that $\Ric(\omega_0)$ is $\de\db$-cohomologous to $\Ric(\omega_{\rm FS})=3\omega_{\rm FS}$. Here we are writing $\omega_{\rm FS}$ for the standard Fubini-Study metric on $\mathbb{P}^2$. Therefore
$$T=\frac{\int_{\mathbb{P}^2}\omega_0\wedge\omega_{\rm FS}-\sqrt{\left(\int_{\mathbb{P}^2}\omega_0\wedge\omega_{\rm FS}\right)^2-\int_{\mathbb{P}^2}\omega_{\rm FS}^2\int_{\mathbb{P}^2}\omega_0^2}}{3\int_{\mathbb{P}^2}\omega_{\rm FS}^2},$$
and Buchdahl's index-type theorem \cite[Proposition 5]{Bu1} shows that
$$\left(\int_{\mathbb{P}^2}\omega_0\wedge\omega_{\rm FS}\right)^2-\int_{\mathbb{P}^2}\omega_{\rm FS}^2\int_{\mathbb{P}^2}\omega_0^2\geq 0,$$
with equality if and only if there are $\lambda\in\mathbb{R}_{>0}$ and $\vp\in C^\infty(\mathbb{P}^2)$ such that
$\omega_0=\lambda\omega_{\rm FS}+\ddbar \vp$, and this happens if and only if $\omega_0$ is K\"ahler. This shows that $T$ is a double root of the quadratic if and only if $\omega_0$ is K\"ahler, as claimed.

In unpublished work of Xiaokui Yang and the authors, it is shown that for any initial Hermitian metric $\omega_0$ on $\mathbb{P}^n$ we have
$$\omega(t)\leq C\omega_{\rm FS},$$
on $\mathbb{P}^n\times [0,T)$, which in particular proves a uniform diameter upper bound along the flow. This is shown by applying the maximum principle to the quantity $\log\tr{\omega_{\rm FS}}{\omega(t)},$ and using that $\omega_{\rm FS}$ is K\"ahler and has positive bisectional curvature. Apart from this result, which applies in all dimensions, nothing else is known (when $\omega_0$ is non-K\"ahler).

For a ruled surface, it is only known that the volume of the surface shrinks to zero linearly in finite time \cite{TW}.  Surprisingly, the behavior even of the K\"ahler-Ricci flow on a ruled surface, starting with an arbitrary K\"ahler metric, is not completely known (see \cite{Fong, SW0, SSW} for results in special cases and some known estimates in general).

\section{Non-minimal complex surfaces} \label{sectionnmcs}

Let $M$ be a complex surface which admits at least one $(-1)$ curve $E$.  For simplicity of the discussion we will assume first that $\textrm{Kod}(M) \neq -\infty$. Then it is shown in \cite[Theorem 1.5]{TW} that the Chern-Ricci flow starting at any initial Gauduchon metric must have a singularity in finite time $T<\infty$, which is necessarily volume non-collapsing, in the sense that $\mathrm{Vol}(M,\omega(t))\geq c>0$ as $t\to T$. Furthermore, the set of all holomorphic curves in $M$ whose volume is going to  zero at time $T$ is precisely equal to a nonempty finite collection of pairwise disjoint $(-1)$-curves $E_1,\dots, E_k$.

Before discussing the behavior of the Chern-Ricci flow, it will be useful to recall to the reader what is known about the K\"ahler-Ricci flow in this setting.  Informally, the K\"ahler-Ricci flow ``contracts'' these $(-1)$ curves to points and then can be continued in a unique way on the ``blown-down'' manifold \cite{SW1, SW2, SW3}.  This can be repeated until the surface no longer contains any $(-1)$ curves, namely is minimal, in which case we are in the setting of the previous section.  This process is the simplest case of the \emph{analytic minimal model program} proposed by Song-Tian \cite{ST,ST2,ST4}.

We describe now more precisely the results of Song-Weinkove \cite{SW1, SW2}, which made use of estimates from \cite{ST,TZ}.   Let $E_1,\dots,E_k$ be the disjoint $(-1)$ curves as above, then there is a smooth K\"ahler metric $\omega_T$ on $M \setminus \cup E_i$
such that the solution $\omega(t)$ of the K\"ahler-Ricci flow exists on the maximal time interval $[0,T)$ for $0<T<\infty$ and $\omega(t) \rightarrow \omega_T$ in $C^{\infty}_{\textrm{loc}}(M \setminus \cup E_i)$.  Let $\pi: M \rightarrow N$ be the holomorphic map blowing down the $(-1)$ curves $E_1, \ldots, E_k$ to points $p_1, \ldots, p_k$.  Then the metric completion $(N, d)$ of $N \setminus \{p_1, \ldots, p_k\}$ is a compact metric space homeomorphic to $N$ and
$$(M, \omega(t)) \rightarrow (N, d), \qquad \textrm{as } t \rightarrow T,$$
in the  sense of Gromov-Hausdorff, and in particular the diameters of the $(-1)$ curves $E_1, \ldots, E_k$ tend to zero as $t \rightarrow T$.  Moreover, there exists a solution $\omega(t)$ of the K\"ahler-Ricci flow on $N$ starting at time $t=T$ with the property that
$$(N, \omega(t)) \rightarrow (N, d), \qquad \textrm{as } t \rightarrow T^+,$$
in the sense of Gromov-Hausdorff.  Moreover, the solution $\omega(t)$ on $N$ is the unique K\"ahler-Ricci flow ``through the singularity'' at the level of potentials as described in the work of Song-Tian \cite{ST4}.

The natural question is whether the same holds for the Chern-Ricci flow starting at any Gauduchon metric on $M$.  There are some partial results in this direction which we now describe.  It was shown in \cite{TW3} that the first assertion of the paragraph above holds verbatim. Namely, there exist finitely many disjoint $(-1)$ curves $E_1, \ldots, E_k$ on $M$ and a smooth Gauduchon metric $\omega_T$ on $M \setminus \cup E_i$ such that the solution $\omega(t)$ of the Chern-Ricci flow exists on the maximum time interval $[0,T)$ for $0<T<\infty$ and $\omega(t) \rightarrow \omega_T$ in $C^{\infty}_{\textrm{loc}}(M \setminus \cup E_i)$.

To obtain further results, an assumption is made on the initial data $\omega_0$, namely that there exists a smooth 3-form $\beta$ on $N$ such that
$$(*) \qquad \qquad d \omega_0 = \pi^* \beta, \qquad \qquad \qquad $$
where $\pi: M \rightarrow N$ is the map blowing down $E_1, \ldots, E_k$ to points $p_1, \ldots, p_k \in N$.  This assumption is not entirely satisfactory:  it is essentially saying that the torsion of $\omega_0$ along the $(-1)$ curves on $M$ vanishes.  This eliminates a major obstruction in extending the estimates from the K\"ahler-Ricci flow to the Chern-Ricci flow setting.

Under the assumption $(*)$ it was shown in \cite{TW3} that $(M, \omega(t))$ converges in the sense of Gromov-Hausdorff to a compact metric space $(N, d_T)$ which is homeomorphic to $N$, and in particular the diameters of the $(-1)$ curves shrink to zero as $t \rightarrow T$.  It was later proved by Nie \cite{Nie2} and T\^o \cite{To} independently that the Chern-Ricci flow can be continued on $N$ and that one obtains Gromov-Hausdroff convergence backwards in time of $(N, \omega(t))$ to $(N, d_T)$.  The identification of $(N, d_T)$ with the metric completion of $(N \setminus \{ p_1, \ldots, p_k \}, \omega_T)$ remains open.  The more important question is whether one can remove condition $(*)$, or whether in fact it is essential.
For simplicity, we state the following conjecture/question in the setting where there is only one $(-1)$ curve.

\begin{conjq}
Let $M$ be a compact complex surface with $\emph{Kod}(M) \neq -\infty$ and with one $(-1)$ curve $E$ and let $\pi: M \rightarrow N$ be the map blowing down $E$ to $p$ in $N$.  Let $\omega(t)$ be the maximal smooth solution of the Chern-Ricci flow starting at an arbitrary Gauduchon metric $\omega_0$, for $0 \le t< T <\infty$, with $\omega(t) \rightarrow \omega_T$ in $C^{\infty}_{\emph{loc}}(M \setminus E)$ as $t \rightarrow T^-$ where $\omega_T$ is a smooth Gauduchon metric on $M \setminus E$ as above.  Then:
\begin{enumerate}
\item The metric completion of $(M \setminus E, \omega_T)$ is a compact metric space $(N, d_T)$ homeomorphic to $N$ and
$$(M, \omega(t)) \rightarrow (N, d_T), \quad \textrm{as } t\rightarrow T^-,$$
in the sense of Gromov Hausdorff.
\item
There is a unique solution of the Chern-Ricci flow $\omega(t)$ for $t>T$ on $N$ with the property that $\omega(t) \rightarrow (\pi^{-1})^*\omega_T$ in $C^{\infty}_{\emph{loc}}(N \setminus \{ p \})$ as $t \rightarrow T^+$ and
$$(N, \omega(t)) \rightarrow (N, d_T), \quad \textrm{as } t\rightarrow T^+,$$
in the sense of Gromov-Hausdorff.
\end{enumerate}
\end{conjq}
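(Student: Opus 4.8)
The plan is to follow the strategy that succeeded under the torsion hypothesis $(*)$ in \cite{TW3}, and in the K\"ahler case in \cite{SW1, SW2}, while replacing each step that used $(*)$ by an estimate valid for an arbitrary Gauduchon $\omega_0$. Throughout I use the scalar reduction of Section \ref{sect_pde}: I write $\omega(t) = \omega_0 - t\,\mathrm{Ric}(\omega_0) + \ddbar\varphi$, solve (\ref{vpe}), and fix the reference forms $\hat\omega_t := \omega_0 - t\,\mathrm{Ric}(\omega_0)$, which degenerate precisely along $E$ as $t \to T$ since $\mathrm{Vol}(E, \omega(t)) = \int_E \hat\omega_t \to 0$ by the volume formula of Section \ref{sectsurf1}. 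The adjunction relation $K_M = \pi^* K_N + [E]$ identifies the limiting $\ddb$-class of $\hat\omega_T$ as a pullback of a Gauduchon class from $N$ plus a multiple of $[E]$; this is the structural input that should let one push the limit metric $\omega_T$ forward across $p$. Since the $C^\infty_{\mathrm{loc}}(M \setminus E)$ convergence to $\omega_T$ is already part of the hypotheses (it holds without $(*)$ by \cite{TW3}), the work in part (1) is to control the geometry near $E$.

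Concretely, part (1) has two components: (a) a diameter estimate $\mathrm{diam}(E, \omega(t)) \to 0$ as $t \to T^-$, and (b) the identification of the metric completion of $(M \setminus E, \omega_T)$ with $N$ as a topological space. For (b) I would push $\omega_T$ forward to a Gauduchon metric $\ti{\omega}_T := (\pi^{-1})^* \omega_T$ on $N \setminus \{p\}$ and show, using a uniform $C^0$ bound on the local Monge-Amp\`ere potential near $p$, that $\ti{\omega}_T$ extends across $p$ with bounded local potential, so that the completion is exactly $N$; combined with (a) and the non-collapsing of total volume recalled in Section \ref{sectionnmcs}, this yields the Gromov-Hausdorff convergence $(M, \omega(t)) \to (N, d_T)$.

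For part (2) I would restart the flow on $N$ from $\ti{\omega}_T$ at time $t = T$. The maximal-time characterization (\ref{T}), applied on $N$, guarantees a unique maximal smooth solution $\omega(t)$ for $t$ in some interval $(T, T')$, and the convergence $\omega(t) \to \ti{\omega}_T$ in $C^\infty_{\mathrm{loc}}(N \setminus \{p\})$ as $t \to T^+$ should follow from local parabolic estimates, given that the initial current has continuous local potentials and is smooth away from $p$, in the spirit of \cite{Nie2, To}. Uniqueness is a parabolic maximum-principle comparison at the level of potentials, and the backward Gromov-Hausdorff convergence $(N, \omega(t)) \to (N, d_T)$ would follow from a stability estimate identifying the metric completion of $(N \setminus \{p\}, \ti{\omega}_T)$ with $(N, d_T)$, the same limit reached from below $T$.

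The hard part will be the diameter estimate (a) for a general Gauduchon $\omega_0$. In the K\"ahler setting this is exactly where the \emph{parabolic Schwarz Lemma} of \cite{ST, Ya} enters: fixing a metric $\omega_N$ on $N$, it controls $\tr{\omega(t)}{\pi^* \omega_N}$ by the maximum principle and underlies the distance estimates that force $E$ to collapse. As emphasized in this survey, no analogue of this lemma is available in the non-K\"ahler case. Indeed, when one computes the evolution of $\log \tr{\omega(t)}{\pi^* \omega_N}$ for a Hermitian $\omega(t)$, the Chern connection contributes torsion terms involving $\partial\omega_0$ and $\db\omega_0$ which, precisely along $E$, are what condition $(*)$ was designed to kill; without $(*)$ these terms are of the same order as the favorable negative terms and cannot obviously be absorbed. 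Overcoming this---either by finding a torsion-corrected quantity whose evolution absorbs the bad terms (as in the twisted test functions of \cite{TW3, FTWZ}), or by an integral/pluripotential argument that bypasses the pointwise Schwarz Lemma (along the lines suggested for Conjecture \ref{mingen} via the method of \cite{GSW}, and the workarounds of \cite{TWY})---is, in my view, the crux of the whole problem.
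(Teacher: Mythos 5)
This statement is not a theorem in the paper at all: it is posed as an open Conjecture/Question, precisely because nobody knows how to prove it. The paper records exactly what is known --- the $C^\infty_{\rm loc}(M\setminus E)$ convergence to $\omega_T$ holds for arbitrary Gauduchon $\omega_0$ by \cite{TW3}, while the Gromov--Hausdorff convergence in (1) and the continuation results of Nie \cite{Nie2} and T\^o \cite{To} in (2) are only known under the extra torsion hypothesis $(*)$ --- and asks whether $(*)$ can be removed. Your proposal is therefore not a proof but a research plan, and you concede as much when you call the diameter estimate near $E$ ``the crux of the whole problem'' and leave it open. A plan whose decisive step is explicitly unresolved does not establish the statement; what you have written is essentially a restatement of the program already laid out in the survey and in \cite{TW3, SW1, SW2}, together with a correct diagnosis of why the parabolic Schwarz Lemma argument breaks down (the torsion terms along $E$ that $(*)$ was designed to kill).

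Beyond the admitted gap, two of the steps you treat as routine are themselves open or stronger than what your tools give. First, your step (1)(b): the identification of $(N,d_T)$ with the metric completion of $(N\setminus\{p\},\omega_T)$ is recorded in the paper as open \emph{even under the hypothesis} $(*)$, so a ``uniform $C^0$ bound on the local Monge--Amp\`ere potential near $p$'' cannot suffice --- in the K\"ahler case \cite{SW1, SW2} this required delicate distance estimates well beyond potential bounds, and those estimates are exactly what is missing in the Hermitian setting. Second, your uniqueness argument for part (2) via a ``parabolic maximum-principle comparison at the level of potentials'' would at best give uniqueness of the potential $\varphi$; the paper points out that the uniqueness demanded in (2) is at the level of the metric flow itself, which is technically stronger than what is known even for the K\"ahler--Ricci flow through singularities \cite{ST4}. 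So even granting your collapse estimate (a), the remaining steps do not close as written.
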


Note that the uniqueness statement in (2) is technically stronger than what is known for the K\"ahler-Ricci flow, where uniqueness is stated in terms of the potential function $\varphi$ solving a parabolic complex Monge-Amp\`ere equation \cite{ST4}.

Lastly, we consider the case when $\textrm{Kod}(M)=-\infty$ and $M$ is not minimal. In this case we still necessarily have $T<\infty$, but the total volume of $M$ may now also go to zero. If $\mathrm{Vol}(M,\omega(t))\geq c>0$ as $t\to T$ then the discussion is exactly the same as above. If on the other hand $\mathrm{Vol}(M,\omega(t))\to 0$ as $t\to T$, then $M$ must be either birational to a ruled surface, or of class VII (but not an Inoue) \cite[Theorem 1.5]{TW}. In fact, we can say that $M$ cannot be a blown-up Inoue surface either: if that was the case, there would be a sequence of blowups $\pi:M\to N$ where $N$ is Inoue, and so we would have $K_M=\pi^*K_N+E$ where $E$ is an effective $\pi$-exceptional divisor. Recall as mentioned earlier that $K_N$ admits a smooth Hermitian metric with nonnegative curvature, and $(K_N\cdot K_N)=0$. Then we would have
$$0\leq \int_E(\omega_0+2\pi Tc_1(K_M))=\int_E \omega_0+2\pi T(E\cdot E),$$
since this equals $\lim_{t\to T}\mathrm{Vol}(E,\omega(t))$, and so
\[\begin{split}
\lim_{t\to T}\mathrm{Vol}(M,\omega(t))&=\int_M(\omega_0+2\pi Tc_1(K_M))^2\\
&=\int_M\omega_0^2+4\pi^2T^2(E\cdot E)+4\pi T\int_M\omega_0\wedge \pi^*c_1(K_N)+4\pi T\int_E\omega_0\\
&\geq \int_M\omega_0^2+4\pi T\int_M\omega_0\wedge \pi^*c_1(K_N)+2\pi T\int_E\omega_0>0.
\end{split}\]
Apart from this, nothing is known about the behavior of the flow with finite-time collapsing singularities (see also the discussion above for Hopf surfaces and $\mathbb{P}^2$ and ruled surfaces). For the K\"ahler-Ricci flow, it is expected (and known in dimensions at most $3$ \cite{ToZ2}) that such singularities exist precisely when $M$ admits a {\em Fano fibration}.

\section{Finite time singularities} \label{sectfin}

Let $\omega(t)$ be a solution of the Chern-Ricci flow \eqref{CRF} on a compact complex manifold $M^n$, starting at a Hermitian metric $\omega$, which develops a singularity at a finite time $T$. As mentioned earlier, this condition is equivalent to $K_M$ not being nef. As in the K\"ahler case \cite{CT} we define the singularity formation locus as
$$
\Sigma=M\backslash  \left\{  \begin{array}{l}   x\in M\ |\ \exists U\ni x \textrm{ open, }\exists C>0,  \\
 \textrm{ s.t. } |\textrm{Rm}(\omega(t))|_{g(t)}\leq C \textrm{ on } U\times [0,T) \end{array} \right\}.
$$
Then Gill-Smith \cite{GS} show that we have
$$\Sigma=M\backslash  \left\{ \begin{array}{l} x\in M\ |\ \exists U\ni x \textrm{ open, }\exists C>0, \\  \textrm{ s.t. } R(\omega(t))\leq C \textrm{ on } U\times [0,T)\end{array} \right\},
$$
where $R(\omega(t))$ is the Chern scalar curvature, and furthermore also that
$$\Sigma=M\backslash \left\{ \begin{array}{l} x\in M\ |\ \exists U\ni x \textrm{ open, }\exists  \omega_T  \textrm{ Hermitian metric} \\ \textrm{on $U$  s.t. }   \omega(t)\overset{C^\infty(U)}{\to}\omega_T \textrm{ as }t\to T^{-}\end{array} \right\}.$$
In particular they deduce that at such a singularity the supremum of the Chern scalar curvature has to blow up, extending work of Zhang \cite{Z2} in the K\"ahler case.

Let now $V\subset M$ be an irreducible closed analytic subvariety with $\dim V=k>0$. Then its volume along the flow is
$$\mathrm{Vol}(V,\omega(t))=\int_V\frac{\omega(t)^k}{k!}>0,$$
and we can define
$$\Sigma'=\bigcup_{\mathrm{Vol}(V,\omega(t))\to 0}V,$$
as the set-theoretic union of all such subvarieties for which the volume shrinks to zero as $t$ approaches $T$ (the whole manifold $V=M$ is also allowed).
Then we have
\begin{equation}\label{inclusion}
\Sigma'\subset\Sigma.
\end{equation}
Indeed, suppose $x\in\Sigma'$, so there is a subvariety $V\ni x$ whose volume shrinks to zero. If $x\not\in\Sigma$, then there is some $U\ni x$ open and $\omega_T$ a Hermitian metric on $U$ such that $\omega(t)\to \omega_T$ smoothly on $U$ as $t$ approaches $T$. But then
$$\mathrm{Vol}(V,\omega(t))\geq\int_{V\cap U}\frac{\omega(t)^k}{k!}\to\int_{V\cap U}\frac{\omega_T^k}{k!}>0,$$
contradicting the fact that $\mathrm{Vol}(V,\omega(t))\to 0$. As observed in \cite{GS}, it follows from \cite{TW} that when $\dim M=2$ and $\omega_0$ is Gauduchon, then we have
\begin{equation}\label{sigma}
\Sigma'=\Sigma,
\end{equation}
and furthermore $\Sigma$ is equal to $M$ when $\mathrm{Vol}(M,\omega(t))\to 0$, and otherwise $\Sigma$ is a finite union of disjoint $(-1)$-curves. In particular, in this case $\Sigma$ is a closed analytic subvariety of $M$.

\begin{conjq}\label{cc}
Let $\omega(t)$ be a solution of the Chern-Ricci flow \eqref{CRF} on a compact complex manifold $M^n$, starting at a Hermitian metric $\omega$, which develops a singularity at finite time $T$. Then we have that \eqref{sigma} holds, and furthermore $\Sigma$ is a closed analytic subvariety of $M$.
\end{conjq}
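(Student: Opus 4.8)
The plan is to split the statement into two independent assertions---the reverse inclusion $\Sigma \subseteq \Sigma'$ (we already have $\Sigma'\subseteq\Sigma$ from \eqref{inclusion}) and the analyticity of the resulting set---and then to adapt to the Hermitian setting the strategy that Collins-Tosatti \cite{CT} use for the K\"ahler-Ricci flow, where the singularity set is identified with the null locus of the limiting nef class and that null locus is shown to be analytic. Writing $\omega(t)=\omega_0-t\Ric(\omega_0)+\ddbar\varphi(t)$ as in \eqref{vpe}, every $\omega(t)$ represents the fixed Bott-Chern class $\alpha_t=\{\omega_0\}-t\{\Ric(\omega_0)\}$, and by the characterization \eqref{T} of $T$ the limiting class $\alpha_T$ is nef but carries no smooth positive representative. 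There is a clean dichotomy governed by the total volume: if $\vol(M,\omega(t))\to 0$ then $V=M$ is itself a volume-collapsing subvariety, so $\Sigma'=M$, whence $\Sigma=M$ and analyticity is trivial. Thus the substance of the conjecture is the non-collapsing case, where I would expect $\Sigma=\Sigma'=\mathrm{Null}(\alpha_T)$, the Hermitian analogue of the null locus, namely the union of all positive-dimensional subvarieties $V$ along which $\alpha_T$ has vanishing self-intersection.

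For the reverse inclusion the task is a purely local a priori estimate: if no volume-collapsing subvariety passes through $x$, then $\omega(t)$ should converge smoothly near $x$, which by the Gill-Smith characterization \cite{GS} is equivalent to $x\notin\Sigma$. First I would construct, in the class $\alpha_T$, a positive current $\omega_0-T\Ric(\omega_0)+\ddbar\psi$ with analytic singularities supported on $\mathrm{Null}(\alpha_T)$, dominating a positive multiple of $\omega_0$ on any open set $U$ disjoint from that locus. Using $\psi$ as a barrier, I would then run the maximum-principle estimates developed for the K\"ahler-Ricci flow (\cite{TZ, CT, Z2}) in their parabolic form: a uniform local $C^0$ bound on $\varphi$, then a uniform upper bound $\omega(t)\le C\,\omega_0$ on compact subsets of $U$ by applying the maximum principle to $\log\tr{\omega_0}{\omega(t)}-A\varphi$ for $A$ large, and finally higher-order estimates of Evans-Krylov and Calabi type giving local smooth convergence to $\omega_T$ away from $\Sigma'$.

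The hard part will be twofold, and both difficulties are exactly what keep the conjecture open. Analytically, the second-order estimate for $\tr{\omega_0}{\omega(t)}$ produces torsion terms of the non-K\"ahler metric $\omega_0$ that are not controlled by the right-hand side of the flow; this is the obstruction already met in \cite{TW3} and \cite{TWY}, and it is aggravated here because $\omega_0$ is merely Hermitian, so the barrier current and the background form have no built-in compatibility with the torsion and the usual integration-by-parts and commutation identities fail. Geometrically, establishing that $\mathrm{Null}(\alpha_T)$ is a genuine analytic subvariety requires extending the Collins-Tosatti structure theorem for non-K\"ahler loci of nef and big classes from compact K\"ahler manifolds to Bott-Chern classes on an arbitrary compact complex manifold; even defining the relevant self-intersection numbers in Bott-Chern cohomology is delicate in the non-K\"ahler setting. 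While Demailly regularization and the Siu semicontinuity of Lelong numbers remain available without the K\"ahler hypothesis, the mass-concentration and intersection-theoretic step that identifies the non-K\"ahler locus with the null locus uses K\"ahlerness in an essential way, and finding a substitute in the Hermitian setting is, I expect, the central obstacle.
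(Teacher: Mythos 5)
First, a point of order: the statement you were given is a Conjecture/Question, not a theorem. The paper contains no proof of it; it only records the known special cases (the volume-collapsing case, which follows from \eqref{inclusion}; the case of K\"ahler $\omega_0$, due to Collins--Tosatti \cite{CT}; and the case $\dim M=2$ with $\omega_0$ Gauduchon, due to Gill--Smith \cite{GS}). Measured against that, the part of your proposal that is actually a proof is correct and coincides with the paper's own remark: if $\mathrm{Vol}(M,\omega(t))\to 0$ then $V=M$ is itself a collapsing subvariety, so $\Sigma'=M$, hence $\Sigma=M$ by \eqref{inclusion}, and analyticity holds trivially; this reduces the conjecture to the non-collapsing case. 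Your proposed identification of $\Sigma$ with a Hermitian null locus $\mathrm{Null}(\alpha_T)$ of the limiting Bott--Chern class, to be proven via a barrier current with analytic singularities plus local maximum-principle estimates, is the natural transplant of the Collins--Tosatti strategy and is surely the attack the authors intend.

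However, the rest of the proposal is a research plan, not a proof, and you acknowledge as much: both load-bearing steps are left open. The local second-order estimate $\omega(t)\le C\omega_0$ away from the collapsing locus, obtained from the quantity $\log\tr{\omega_0}{\omega(t)}-A\varphi$, fails for general Hermitian $\omega_0$ because of uncontrolled torsion terms; this is the same obstruction the paper highlights after Conjecture \ref{prop_k_gh} (the lack of a non-K\"ahler parabolic Schwarz lemma), and naming the difficulty is not the same as overcoming it. Likewise, the analyticity of $\mathrm{Null}(\alpha_T)$ and the identification of the singularity locus with the null locus rest on the mass-concentration and intersection-theoretic arguments of \cite{CT}, which use K\"ahlerness essentially; no Hermitian substitute is offered, and as you say yourself even the definition of the relevant intersection numbers in Bott--Chern cohomology is unclear in this generality. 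So your writeup should be presented as a strategy whose two missing ingredients are precisely the content of the open problem, not as a solution. If you want a tractable first case, the paper points to one: $\dim M=2$ with arbitrary (non-Gauduchon) initial metric, where the surface classification and the results of \cite{TW, GS} may substitute for part of the missing machinery.
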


Thanks to \eqref{inclusion}, the answer is affirmative in the case when the total volume $\mathrm{Vol}(M,\omega(t))$ goes to zero, so we can assume that the flow is volume noncollapsing. In the case when $\omega_0$ is K\"ahler, Question \ref{cc} was posed by Feldman-Ilmanen-Knopf \cite{FIK} and proved by Collins-Tosatti \cite{CT}. As mentioned above, Gill-Smith \cite{GS} proved it when $M$ is a surface and $\omega_0$ is Gauduchon. The first case to consider would then be when $M$ is a surface and $\omega_0$ is arbitrary.

\section{Singularity type at infinity} \label{sectinfin}

Let $M$ be a compact complex surface with $K_M$ nef, which as mentioned earlier is equivalent to the fact that the Chern-Ricci flow \eqref{CRF} starting at any Gauduchon metric $\omega_0$ has a solution for all $t\geq 0$.

As discussed earlier, Kodaira's classification shows that $M$ is one of the following:
\begin{itemize}
\item[(1)] A minimal surface of general type
\item[(2)] A minimal properly elliptic surface
\item[(3)] A K\"ahler Calabi-Yau surface
\item[(4)] A Kodaira surface
\item[(5)] An Inoue surface.
\end{itemize}

Following Hamilton \cite{Ha2}, we say that a long-time solution $\omega(t)$ of the Chern-Ricci flow \eqref{CRF} on $M$ develops a {\em type III} singularity at infinity if we have
$$\sup_{M\times[0,\infty)} t|{\rm Rm}(\omega(t))|_{g(t)}<+\infty,$$
or equivalently $\sup_M\left|{\rm Rm}\left(\frac{\omega(t)}{t}\right)\right|_{g(t)/t}$ remains uniformly bounded for all $t$ sufficiently large,
and a {\em type IIb} singularity if
$$\sup_{M\times [0,\infty)} t|{\rm Rm}(\omega(t))|_{g(t)}=+\infty,$$
or equivalently $\sup_M\left|{\rm Rm}\left(\frac{\omega(t)}{t}\right)\right|_{g(t)/t}$ does not have a uniform bound as $t$ goes to infinity.

In analogy with the results proved in \cite{ToZ} for the K\"ahler-Ricci flow, we have the following classification, which is complete modulo Conjectures \ref{prop_nk} and \ref{inoue} (3):
\begin{theorem}\label{sings}
Let $M$ be a compact complex surface with $K_M$ nef, and $\omega(t)$ a solution of the Chern-Ricci flow \eqref{CRF} on $M$ starting at a Gauduchon metric $\omega_0$. Then the flow develops a type III singularity at infinity if and only if:
\begin{itemize}
\item In case (1), if and only if $K_M$ is ample
\item In case (2), (assuming Conjecture \ref{prop_nk}) if and only if the only singular fibers are multiples of a smooth fiber (i.e. of Kodaira type $mI_0, m\geq 2$)
\item In case (3), if and only if $M$ is finitely covered by a torus and $\omega_0$ is K\"ahler
\item In case (4), never
\item In case (5), (assuming Conjecture \ref{inoue} (3)) always.
\end{itemize}
\end{theorem}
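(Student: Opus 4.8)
The plan is to rewrite the type III condition as the requirement that $\sup_M t\,|\mathrm{Rm}(\omega(t))|_{g(t)}$, equivalently $\sup_M|\mathrm{Rm}(\omega(t)/t)|_{g(t)/t}$, stay bounded as $t\to\infty$, and then to read off the answer in each case from the convergence results recalled in Section \ref{sectsurf1}. In every case the dichotomy is the same: if the suitably rescaled flow converges to a limiting metric with controlled curvature then the flow is type III, whereas if the limiting object is singular or has genuinely nonvanishing curvature at the relevant scale then the rescaled curvature must blow up and the flow is type IIb. Accordingly I would split the argument into the cases where $K_M$ is big, so that $\omega(t)/t$ converges to a (possibly twisted, possibly singular) K\"ahler--Einstein metric, and the cases where $K_M$ is torsion, so that $\omega(t)$ itself converges to a fixed Chern--Ricci flat metric.

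For case (1), when $K_M$ is ample the result of \cite{TW} recalled in Subsection \ref{sectionkod2} gives smooth convergence $\omega(t)/t\to\omega_{\textrm{KE}}$ on all of $M$, so $|\mathrm{Rm}(\omega(t)/t)|_{g(t)/t}$ converges to the bounded quantity $|\mathrm{Rm}(\omega_{\textrm{KE}})|_{g_{\textrm{KE}}}$ and the flow is type III; when $K_M$ is not ample, Gill's theorem \cite{G3} gives convergence only in $C^\infty_{\textrm{loc}}(M\setminus S)$ to an orbifold K\"ahler--Einstein metric with nonempty singular set $S$, and I would show that the rescaled curvature blows up as one approaches $S$, forcing type IIb, in analogy with the K\"ahler case of \cite{ToZ}. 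Case (2) runs in parallel using the convergence $\omega(t)/t\to f^*\omega_\Sigma$: when the only singular fibers are of type $mI_0$ the Weil--Petersson form vanishes, a finite cover makes $M$ an elliptic bundle, and the uniform curvature bound that is precisely the content of Conjecture \ref{prop_nk} gives type III; when there are other singular fibers one has $\omega_{\rm WP}\not\equiv 0$, the twisted K\"ahler--Einstein metric \eqref{tKE} together with the collapsing fibers degenerates near $f(S)$, and one again establishes that the rescaled curvature is unbounded, giving type IIb.

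For the torsion cases I would use that the flow converges smoothly, and at an exponential rate, to a fixed Chern--Ricci flat Gauduchon metric $\omega_\infty$ (Section \ref{sectionKod0}), so that $t\,|\mathrm{Rm}(\omega(t))|_{g(t)}\to\infty$ unless $|\mathrm{Rm}(\omega_\infty)|\equiv 0$, in which case the exponential rate beats the factor $t$ and yields type III. Thus type III holds exactly when the flow converges to a \emph{Chern-flat} metric. In complex dimension two a compact Chern-flat Hermitian surface is, by Boothby's theorem, finitely covered by a complex torus and carries a flat K\"ahler metric; this is precisely the case (3) condition, so type III there occurs if and only if $M$ is finitely covered by a torus and $\omega_0$ is K\"ahler, the flat limit being attained only in that situation while for $K3$, Enriques, or non-K\"ahler $\omega_0$ the limit satisfies $\sup_M|\mathrm{Rm}|>0$. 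Since a Kodaira surface is non-K\"ahler it admits no Chern-flat metric, so its flow limit always has nonzero Chern curvature and the flow is always type IIb, proving case (4); finally, case (5) follows from the convergence $\omega(t)/t\to\omega_\infty$ on Inoue surfaces together with the uniform rescaled-curvature bound that is exactly Conjecture \ref{inoue}(3), giving type III always.

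The main obstacles are of two kinds. The first is structural: the type III conclusions in cases (2) and (5) rest on the uniform rescaled-curvature bounds of Conjectures \ref{prop_nk} and \ref{inoue}(3), which at present are known only under extra hypotheses on $\omega_0$, and this is exactly why the theorem is stated conditionally there. The second, and genuinely analytic, difficulty lies in the type IIb directions in the big cases --- case (1) with $K_M$ not ample and case (2) with nontrivial singular fibers --- where one must prove a \emph{lower} bound showing that $\sup_M t\,|\mathrm{Rm}(\omega(t))|_{g(t)}$ actually diverges as $t\to\infty$, rather than merely noting that the limiting metric is singular; in the absence of a Hermitian parabolic Schwarz lemma (as noted after Conjecture \ref{prop_k}) this curvature blow-up is expected to be the hardest step. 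By contrast, the torsion cases (3) and (4) reduce cleanly to the algebraic question of Chern-flatness via Boothby's theorem and should present no serious difficulty.
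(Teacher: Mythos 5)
Your handling of the torsion cases and of the type III directions is essentially the paper's: case (3) via exponential convergence to a Chern--Ricci flat limit $\omega_\infty$, the equivalence ``type III $\Leftrightarrow$ $\omega_\infty$ Chern-flat'', and the classification of compact Chern-flat surfaces (your appeal to Boothby's theorem plays the role of the paper's citation of Gauduchon \cite{Gau}, and the observation that $\omega_\infty=\omega_0+\ddbar\vp_\infty$ is K\"ahler iff $\omega_0$ is closes the loop); case (4) follows identically, and cases (2) and (5) in the type III direction invoke Conjectures \ref{prop_nk} and \ref{inoue}~(3) exactly as the paper does.

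The genuine gap is in the type IIb directions of cases (1) and (2), and it is not merely a technical one: the paper does \emph{not} prove that $t\,|\mathrm{Rm}(\omega(t))|_{g(t)}$ blows up by analyzing the degeneration of $\omega(t)/t$ near the singular set $S$ (which, as you concede, you do not know how to do). Instead it uses Lemma \ref{tz}, stated immediately before the theorem as ``the key'': if there is a nonconstant holomorphic map $f:\mathbb{P}^1\to M$ with $K_M\cdot f(\mathbb{P}^1)=0$, then \emph{every} solution of the Chern--Ricci flow is type IIb; the proof is that of \cite[Proposition 1.4]{ToZ}, which works verbatim in the Hermitian setting. This converts the ``hard analytic'' step into soft algebraic geometry: in case (1), $K_M$ nef and big but not ample forces the existence of a $(-2)$-curve $C$ with $K_M\cdot C=0$; in case (2), any singular/multiple fiber not of type $mI_0$ contains a rational curve by Kodaira's classification of fibers \cite[\S V.7]{bhpv}, and $K_M\cdot C=0$ follows from the canonical bundle formula since $K_M^\ell$ is pulled back from $\Sigma$. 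Without this lemma your proposal simply does not establish type IIb in those cases. Moreover, your proposed dichotomy for case (2) is incorrect as stated: the presence of singular fibers other than $mI_0$ does \emph{not} imply $\omega_{\rm WP}\not\equiv 0$ (isotrivial elliptic fibrations with constant $j$-invariant can have $I_0^*$ fibers, for which $\omega_{\rm WP}\equiv 0$); the relevant feature of such fibers is not variation of complex structure but the rational curves they contain.
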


The key is the following Lemma, which is proved in \cite[Proposition 1.4]{ToZ} in the K\"ahler case, but whose proof works verbatim:
\begin{lemma}\label{tz}
Let $M$ be a compact complex manifold with $K_M$ nef, and suppose there is a nonconstant holomorphic map $f:\mathbb{P}^1\to M$ with $K_M\cdot f(\mathbb{P}^1)=0.$ Then every solution of the Chern-Ricci flow \eqref{CRF} on $M$ starting at any Hermitian metric $\omega_0$ develops a type IIb singularity at infinity.
\end{lemma}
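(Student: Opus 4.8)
The plan is to argue by contradiction: assuming the flow develops a type III singularity, I would combine the constancy of the area of the rational curve $C=f(\mathbb{P}^1)$ with a Gauss--Bonnet computation on $\mathbb{P}^1$ to force the Chern curvature of $\omega(t)/t$ to blow up along $C$. The first step is to track the area of $C$. Since the Chern-Ricci form $\Ric(\omega(t))$ represents $2\pi c_1(M)=-2\pi c_1(K_M)$ in de Rham cohomology for every $t$, differentiating under the integral sign gives
$$\frac{d}{dt}\int_{\mathbb{P}^1}f^*\omega(t)=-\int_{\mathbb{P}^1}f^*\Ric(\omega(t))=2\pi\, K_M\cdot f(\mathbb{P}^1)=0,$$
using the hypothesis $K_M\cdot f(\mathbb{P}^1)=0$. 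Hence $\int_{\mathbb{P}^1}f^*\omega(t)=\int_{\mathbb{P}^1}f^*\omega_0=:a_0>0$ is constant in $t$, and therefore the area of $C$ measured by the normalized metric $\omega(t)/t$ is $a_0/t\to 0$ as $t\to\infty$.

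Next, suppose for contradiction that the singularity is of type III, so that $\sup_M|\mathrm{Rm}(\omega(t)/t)|_{g(t)/t}\le C_0$ for all large $t$. Consider the pullback $f^*(\omega(t)/t)$ on $\mathbb{P}^1$, which is a genuine Hermitian metric away from the finite ramification set $\{df=0\}$. Being a metric on a Riemann surface it is automatically K\"ahler, so its Chern-Ricci form coincides with its Gauss curvature form $K_t\,dA_t$, and by the Gauss equation for the holomorphic curve $C$ (whose second fundamental form enters with a sign that makes the intrinsic curvature no larger than the ambient one) we obtain the pointwise bound $K_t\le c\,C_0$, away from the ramification points, for a dimensional constant $c$. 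On the other hand the integral of $K_t\,dA_t$ is topological: $f^*(\omega(t)/t)$ induces a (possibly singular) metric on $T_{\mathbb{P}^1}$, whose smooth curvature part integrates to
$$\int_{\mathbb{P}^1}K_t\,dA_t=2\pi\deg T_{\mathbb{P}^1}+2\pi\sum_p k_p=4\pi+2\pi\sum_p k_p\ge 4\pi,$$
where the $k_p\ge 0$ are the vanishing orders of $df$ at the ramification points. Combining the last display with the pointwise bound and the area computation yields
$$4\pi\le\int_{\mathbb{P}^1}K_t\,dA_t\le c\,C_0\int_{\mathbb{P}^1}f^*\!\left(\frac{\omega(t)}{t}\right)=c\,C_0\,\frac{a_0}{t}\longrightarrow 0,$$
a contradiction. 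Hence no uniform bound $C_0$ exists and the singularity must be of type IIb.

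The cohomological area computation and the topological evaluation of the Gauss-curvature integral are routine. The step demanding the most care is the pointwise upper bound $K_t\le c\,C_0$: one must check that the Gauss equation comparing the intrinsic curvature of the complex curve $C$ with the ambient \emph{Chern} curvature $\mathrm{Rm}(\omega(t)/t)$ survives in the Hermitian (non-K\"ahler) setting. This is precisely where one uses the boundedness of the full Chern Riemann curvature tensor, rather than merely the Chern-Ricci curvature, and it is the ingredient that lets the proof from the K\"ahler case in \cite{ToZ} carry over verbatim, since the induced metric on the one-dimensional $C$ is automatically K\"ahler and the second fundamental form of the holomorphic map $f$ contributes nonnegatively.
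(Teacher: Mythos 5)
Your proposal is correct and follows essentially the same route as the proof the paper relies on (namely \cite[Proposition 1.4]{ToZ}, which the paper cites as working verbatim in the Hermitian setting): the area $\int_{\mathbb{P}^1}f^*\omega(t)$ is constant because $K_M\cdot f(\mathbb{P}^1)=0$, so the rescaled area decays like $1/t$, while a type III bound on the Chern curvature would force a uniform positive lower bound on the area of any rational curve via the curvature-decreasing property of the Chern connection on complex subbundles together with Gauss--Bonnet on $\mathbb{P}^1$ (with the branch points of $f$ contributing with the favorable sign, as you verified).
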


As usual, we will refer to $f(\mathbb{P}^1)$ as a (possibly singular) rational curve in $M$.

\begin{proof}[Proof of Theorem \ref{sings}]
Assume case (1). If $K_M$ is ample, then \cite[Theorem 1.7]{TW} shows that $\omega(t)/t$ converges smoothly to a K\"ahler-Einstein metric on $M$, hence its curvature remains bounded, hence type III. If $K_M$ is not ample, then $M$ contains some $(-2)$-curve $C$, which satisfies $K_M\cdot C=0$, and so the flow is type IIb thanks to Lemma \ref{tz}.

Assume case (2). If there is some singular/multiple fiber which is not of type $mI_0, m\geq 2$, then by Kodaira's classification of these fibers \cite[\S V.7]{bhpv} this fiber contains a rational curve $C$. Since $C$ is contained in a fiber of $f$, and $K_M^\ell$ is the pullback of some divisor on $\Sigma$ for some $\ell\geq 1$ (thanks to Kodaira's canonical bundle formula \cite[Theorem V.12.1]{bhpv}), it follows that $K_M\cdot C=0$, and so the flow is type IIb thanks to Lemma \ref{tz}.

If on the other hand all singular fibers of $f$ are of type $mI_0,m\geq 2$, then $M$ is a ``quasi-bundle'', which is finitely covered by an elliptic bundle (see e.g. \cite[Proposition 3.3]{Ser}), and so the flow is type III thanks to Conjecture \ref{prop_nk}.

Assume case (3). If $\omega_0$ is K\"ahler, then this is proved in \cite[Theorem 1.5]{ToZ}. If $\omega_0$ is not K\"ahler, then by Gill's result \cite{G} we have that $\omega(t)$ converges smoothly to a Chern-Ricci-flat metric $\omega_\infty$ of the form $\omega_\infty=\omega_0+\ddbar\vp_\infty$ for some $\vp_\infty\in C^\infty(M)$. As mentioned earlier, the convergence is exponentially fast, and so it follows easily that the flow is type III if and only if $\omega_\infty$ is Chern-flat (i.e. its Chern Riemann curvature tensor vanishes identically). But a Hermitian metric on a compact complex surface is Chern-flat if and only if it is K\"ahler and flat by \cite{Gau}, which happens if and only if $M$ is finitely covered by a torus. But $\omega_\infty$ is K\"ahler if and only if so is $\omega_0$, and the result follows.

In case (4) the exact same discussion as case (3) applies, and of course in this case $\omega_\infty$ is never Chern-flat, since Kodaira surfaces are not finitely covered by a torus.

Lastly, in case (5), all solutions are type III thanks to Conjecture \ref{inoue} (3).
\end{proof}

To end this section, we pose an optimistic question which, if solved, would imply Conjectures \ref{prop_nk} and \ref{inoue} (2) and (3) (at least for Gauduchon initial metrics), and hence would make Theorem \ref{sings} unconditional:

\begin{conjq}\label{higherord}
Let $\omega(t)$ be a solution of the Chern-Ricci flow \eqref{CRF}
 on $B\times [0,1]$, where $B=B_1(0)\subset\mathbb{C}^n$. Suppose that
$$A^{-1}\omega_{\mathbb{C}^n}\leq \omega(t)\leq A \omega_{\mathbb{C}^n},$$
holds on $B\times[0,1]$, for some $A>0$. Then for every $k\geq 1$ there are constants $C=C(k,A,n)$ such that
$$\|\omega(t)\|_{C^k(B_{1/2},g_{\mathbb{C}^n})}\leq C,$$
for $t\in [\frac{1}{2},1]$.
\end{conjq}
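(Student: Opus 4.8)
The plan is to treat Question~\ref{higherord} as a purely local interior regularity problem for a uniformly parabolic flow, and to attack it by a parabolic Calabi-type (maximum-principle) argument in which all derivatives are measured against the \emph{fixed, flat} background $g_{\mathbb{C}^n}$. Writing the Chern--Ricci flow as an evolution of the metric coefficients,
$$\ddt g_{i\ov j} = -R_{i\ov j} = \de_i\de_{\ov j}\log\det g,$$
and setting $F:=\log\det g$, we have $\ddt F = g^{i\ov j}\ddt g_{i\ov j} = g^{i\ov j}\de_i\de_{\ov j}F$. The hypothesis $A^{-1}\omega_{\mathbb{C}^n}\le\omega(t)\le A\,\omega_{\mathbb{C}^n}$ is exactly the assertion that the coefficients $g^{i\ov j}$ are bounded and uniformly elliptic, so the flow is uniformly parabolic with constants depending only on $A,n$; in particular $|F|\le C(A,n)$, and the Krylov--Safonov (parabolic De Giorgi--Nash--Moser) estimate applied to the \emph{linear} equation above gives $F\in C^{\alpha}_{\mathrm{loc}}$. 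This alone controls only $\det g$, not $g$, and cannot start a bootstrap: the real work is to obtain a genuine interior bound on $\nabla g$.

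First I would establish the $k=1$ estimate. Because $g_{\mathbb{C}^n}$ is flat, in the standard coordinates its connection coefficients vanish, so the ordinary derivatives $\de g_{i\ov j}$ assemble into a genuine tensor; let $S:=|\nabla g|_{g}^2$ be its squared norm measured with $g$. A Bochner-type computation of the heat operator applied to $S$ produces, modulo torsion, an inequality of the schematic form
$$\left(\ddt - g^{i\ov j}\de_i\de_{\ov j}\right)S \le -c\,|\nabla^2 g|_{g}^2 + C(A,n)\,S^2 + C(A,n),$$
the good negative term coming from ellipticity. To close the maximum principle one must absorb the quadratic reaction $S^2$; the standard device is to test with $e^{-Bf}\eta^2 S$, where $\eta$ is a spatial cutoff with $\eta\equiv1$ on $B_{1/2}$ and $\mathrm{supp}\,\eta\subset B$, and $f:=\tr{g_{\mathbb{C}^n}}{g}$, chosen because a parabolic Schwarz-lemma-type inequality
$$\left(\ddt - g^{i\ov j}\de_i\de_{\ov j}\right) f \le -\ve\, S + C(A,n)$$
forces $e^{-Bf}$, for $B=B(A,n)$ large, to generate a term $\sim -B\ve\,e^{-Bf}\eta^2 S^2$ dominating the reaction. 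Together with a cutoff in time (using the buffer $[0,\tfrac12]$ on which the hypotheses hold) and the fact that $-c|\nabla^2 g|^2_g$ absorbs the derivatives falling on $\eta$, the maximum principle yields $\sup_{B_{1/2}\times[\frac12,1]} S\le C(A,n)$, i.e.\ a bound on $\|\omega(t)\|_{C^1(B_{1/2})}$. This is exactly the mechanism by which Sherman--Weinkove obtained local interior estimates for the K\"ahler--Ricci flow. The higher-order estimates then follow by induction on $k$: assuming interior bounds on $|\nabla^m g|$ for $m\le k-1$ on a slightly larger ball, one runs the same scheme for $S_k:=|\nabla^{k} g|_{g}^2$, since differentiating the flow $k$ times yields a parabolic equation for $\nabla^k g$ whose lower-order terms are controlled by the induction hypothesis. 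A finite nested family of radii between $1/2$ and $1$, shrinking to $1/2$ at the last step, keeps everything inside $B$ while landing the bound on $B_{1/2}$, giving $\|\omega(t)\|_{C^k(B_{1/2},g_{\mathbb{C}^n})}\le C(k,A,n)$ for $t\in[\tfrac12,1]$.

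The hard part is the torsion. Every step above rests on the Schwarz-lemma-type inequality for $f=\tr{g_{\mathbb{C}^n}}{g}$ --- the engine that produces the crucial $-\ve S$ and thereby tames the quadratic reaction --- and, as emphasized in the discussion following Conjecture~\ref{prop_k}, the parabolic Schwarz lemma has not been generalized to the non-K\"ahler setting. Concretely, because the Chern connection of a non-K\"ahler metric has nonzero torsion $T=\de\omega$, the Bochner computation for the heat operators of both $S$ and $f$ acquires extra terms involving $T$, $\nabla T$ and $\db T$; since $T$ is itself of the order of $\nabla g$ (hence comparable to $S^{1/2}$) and its higher derivatives intervene at higher orders, verifying that these torsion contributions can be absorbed by the good negative term $-c|\nabla^2 g|^2_g$ at \emph{every} order is precisely the unresolved difficulty. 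I note finally that the alternative route through the complex parabolic Evans--Krylov theorem and Schauder estimates stalls at the outset: a general Hermitian metric on $B$ is not of the form $\omega_{\mathbb{C}^n}+\ddbar\vp$, so there is no $\ddbar$-potential relative to the flat metric, and the only available potential uses $\omega$ at a fixed time as background --- a quantity merely as regular as the conclusion being sought. This circularity is another face of the same non-K\"ahler obstruction, which is why the direct maximum-principle approach seems the more promising, provided the torsion terms can be pushed through.
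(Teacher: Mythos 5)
You have not given a proof, and indeed there is no proof in the paper to compare against: the statement is posed there as a Conjecture/Question (Question \ref{higherord}), i.e.\ as an open problem. The paper records precisely the state of the art that your plan reproduces: the K\"ahler case is due to Sherman--Weinkove \cite{ShW}; the known Hermitian-case results \cite{ShW2,Chu2,Nie2} prove estimates that depend on the initial metric $\omega_0$, which is exactly what the statement forbids (constants may depend only on $k,A,n$); ``the main issue is how to get control on covariant derivatives of the torsion of $\omega(t)$''; and it is even allowed that the statement ``turns out to be false''. Your text is therefore a proof plan whose decisive step you yourself concede is unresolved --- absorbing the torsion contributions at every order --- and that concession is not a technicality: it is the entire content of the open problem.

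To make the gap concrete: the inequality you assert ``modulo torsion'', namely $\left(\ddt - g^{i\ov j}\de_i\de_{\ov j}\right)S \le -c|\nabla^2 g|_g^2 + CS^2 + C$, is not correct up to absorbable errors in the Hermitian case. Comparing $\de\,\mathrm{Ric}$ with $g^{k\ov l}\de_k\de_{\ov l}(\de g)$ one finds that the third-order terms cancel only via the K\"ahler symmetry $\de_i g_{k\ov j} = \de_k g_{i\ov j}$; without it one is left with terms of the schematic form $\nabla g \ast \nabla\ov{\nabla} T$, i.e.\ terms \emph{linear in third derivatives} of $g$ (second derivatives of the torsion $T$ of the evolving metric), which are not dominated by $-c|\nabla^2 g|_g^2 + CS^2$ at a maximum point. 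This is exactly the paper's ``covariant derivatives of the torsion'' obstruction, and in \cite{ShW2} it is circumvented only by letting the constants depend on $\omega_0$. There is moreover a structural reason the problem is delicate, which your scheme does not see: since $\mathrm{Ric}(\omega)$ is $\de$-closed, along the Chern--Ricci flow one has $\ddt(\de\omega) = -\de\,\mathrm{Ric}(\omega) = 0$, so the torsion $3$-form $\de\omega(t)=\de\omega(0)$ is transported unchanged and is never smoothed by the flow. Hence the asserted bound $\|\omega(t)\|_{C^1(B_{1/2})}\le C(A,n)$ for $t\ge\tfrac12$ would bound the torsion of the \emph{initial} metric by $C(A,n)$ --- a rigidity statement about which Hermitian metrics can flow for unit time while remaining $A$-equivalent to the Euclidean metric. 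Any successful argument must extract this from existence-with-bounds on all of $[0,1]$, rather than from pointwise parabolic smoothing, which is the only engine in your plan (and in the K\"ahler proof of \cite{ShW}).
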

For our purposes, it would be sufficient to prove this conjecture when $n=2$ and $\omega_0$ is Gauduchon. When $\omega(t)$ are K\"ahler, this conjecture is proved in \cite{ShW}. In the general case, partial results appear in \cite{ShW2} (see also \cite{Chu2,Nie2}), but the estimates proved there also depend on the initial metric $\omega_0$, which we do not allow here. The main issue is how to get control on covariant derivatives of the torsion of $\omega(t)$.

To see that  Question \ref{higherord} implies  Conjectures \ref{prop_nk} and \ref{inoue} (2), (3), it suffices to use the arguments in \cite{AT}, which we now outline. For elliptic bundles we take an arbitrary small ball $B\subset\Sigma$ over which the bundle is locally trivial, so that we can identify the preimage of $B$ with $F\times B$ where $F=\mathbb{C}/\Lambda$ is an elliptic curve, and let $p:\mathbb{C}\times B\to F\times B$ be the quotient map, with coordinates $(z_1,z_2)$ upstairs. Let also $\omega_F$ be semipositive $(1,1)$ form on $F\times B$ induced by $\sqrt{-1} dz_1\wedge d\ov{z}_1$. For an Inoue surface $M$, we let $p:\mathbb{C}\times\mathbb{H}\to M$ be the universal cover, and let $\omega_F$ be the semipositive $(1,1)$ form on $M$ which is denoted by $\beta$ in \cite{AT} (its explicit expression depends on the type of Inoue surface). Thus in both cases, upstairs we have an open subset of $\mathbb{C}^2$, where we fix a Euclidean metric. If we denote by $\omega_\infty$ the limit of $\omega(t)/t$ (for elliptic bundles this was denoted by $\omega_\Sigma$), then $p^*(\omega_\infty+\omega_F)$ is a Hermitian metric upstairs.

Instead of working with $\omega(t)/t$, it is more convenient to consider the Chern-Ricci flow $\omega(t)$ normalized by
\begin{equation}\label{NCRF}
\frac{\de \omega}{\de t}  =-\mathrm{Ric}(\omega)-\omega,
\end{equation}
which differs from our original $\omega(t)/t$ only by a space-time rescaling.
Then in either case we know from \cite{AT,FTWZ,TWY} that upstairs $p^*\omega(t)$ is uniformly equivalent to
$p^*(\omega_\infty+e^{-t}\omega_F).$
We then define stretching maps $\lambda_t$ upstairs by $\lambda_t(z_1,z_2)=(z_1e^{t/2},z_2)$, and metrics
$$\omega_t(s)=\lambda_t^*p^*\omega(s+t), \quad -1\leq s\leq 0,$$
which for all fixed $t\geq 0$ solve \eqref{NCRF} with time parameter $s$ and are uniformly equivalent to
$$\lambda_t^*p^*(\omega_\infty+e^{-s-t}\omega_F),$$
which one checks directly is uniformly comparable to a Euclidean metric, independent of $t\geq 0,-1\leq s\leq 0$. Applying Question \ref{higherord} (again after a harmless space-time rescaling to bring it back to the form \eqref{CRF}) thus gives
local uniform higher order estimates for $\omega_t(s)$, for $-\frac{1}{2}\leq s\leq 0$. Setting $s=0$ this gives local  uniform higher order estimates for $\lambda_t^*p^*\omega(t)$, which easily imply higher order estimates and the curvature bound for our original $\omega(t)/t$.

Of course, it is quite possible that Question \ref{higherord} turns out to be false, but we do believe that Conjectures \ref{prop_nk} and \ref{inoue} are true.

\section{Further questions and directions} \label{sectfurther}

\subsection{Higher dimensions and other flows}  \label{other} Although this survey focuses mainly on the case of complex surfaces, many results have been proved for the Chern-Ricci flow on higher dimensional compact complex manifolds, and with an arbitrary Hermitian initial metric $\omega_0$.  These include the results of Section \ref{sect_pde} on the maximal existence time, the convergence of the Chern-Ricci flow to a K\"ahler-Einstein metric on manifolds with $c_1(M)<0$ (see Section \ref{sectionkod2} and \cite{TW}) and Gill's result on manifolds with vanishing first Bott-Chern class (Section \ref{sectionKod0} and \cite{G}).

Examples of collapsing for Hopf surfaces, as described in Section \ref{sectionhopf} can also be extended to higher dimensional Hopf manifolds which are quotients of $\mathbb{C}^n \setminus \{ 0 \}$ \cite{TW3}.  Zheng \cite{Zheng} extended the work of Fang-Tosatti-Weinkove-Zheng \cite{FTWZ} on the Chern-Ricci flow on Inoue surfaces to their higher dimensional analogue, known as Oeljeklaus-Toma manifolds \cite{OT}.

On the other hand, it is not clear how to extend some other results to dimensions $n$ strictly larger than two.
  For example, one may replace exceptional curves of Section \ref{sectionnmcs} by the exceptional hypersurfaces one obtains by blowing up a point of an $n$-dimensional complex manifold (cf. \cite{SW1, SW2}) or consider more general birational transformations in the context of the minimal model program \cite{ST4, Ti}.  The behavior of the Chern-Ricci flow is not known in these cases.

  A major obstacle is that the Gauduchon condition $\partial \ov{\partial} \omega^{n-1}=0$ is weak and surely not preserved by the Chern-Ricci flow in general.  While the ``pluriclosed'' condition, $\partial \ov{\partial}\omega=0$,  \emph{is} preserved by the Chern-Ricci flow, such metrics do not exist on every compact complex manifold, and by itself may not be enough to extend the above-mentioned results.   A rather natural condition, considered in \cite{FT, GL, Shen, Shen2} for example, is to impose both $\partial \ov{\partial}\omega=0$ and $\partial \ov{\partial}\omega^2=0$ (which implies $\partial\ov{\partial}\omega^k=0$ for all $k$).  While the existence of such a metric restricts the possibilities for the underlying complex manifold, one can check that this condition is preserved by the Chern-Ricci flow (the authors thank Xi Sisi Shen for pointing this out).

Another approach is to consider flows of $(n-1, n-1)$ forms (or more generally, $(k,k)$ forms).  One such flow, related to the Gauduchon conjecture (proved in \cite{STW}) that one can prescribe the volume form of a Gauduchon metric \cite{Ga3}, was studied by Zheng \cite{Zheng3}, building on suggestions in \cite{G4, STW}.  Another flow of $(n-1,n-1)$ forms, known as the \emph{Anomaly flow}, is derived from the Hull-Strominger system in String Theory and has been extensively studied by Phong-Picard-Zhang, Fei-Phong and others (see \cite{FP, PPZsurvey, PPZ2} and the references therein).

Chu-Tosatti-Weinkove \cite{CTW} proposed an extension of the Chern-Ricci flow to manifolds with a non-integrable complex structure (see also \cite{KT,V} for different flows, the first one with varying almost complex structure).  Namely, let $\omega_0$ be an almost Hermitian metric and consider the flow:
\begin{equation} \label{ace}
\left\{
                \begin{aligned}
                  &\frac{\de \omega}{\de t}  =-\mathrm{Ric}^{(1,1)}(\omega),\\
                  &\omega(0)=\omega_0,
                \end{aligned}
              \right.
\end{equation}
 where $\textrm{Ric}^{(1,1)}(\omega)$ is the $(1,1)$ part of the Chern-Ricci form of $\omega$.  Chu \cite{Chu} proved long time existence and smooth convergence of (\ref{ace})  under the assumption of the existence of a metric $\omega$ with $\textrm{Ric}^{(1,1)}(\omega)=0$, which is the analogue of Gill's result for the Chern-Ricci flow \cite{G}.  Zheng \cite{Zheng2} then proved the analogue of the maximal existence time result of the authors \cite{TW} described in Section \ref{sect_pde} above.  These results used estimates from \cite{CTW} on the  Monge-Amp\`ere equation in the almost complex setting.

In a different direction, following La Nave-Tian \cite{LT} in the K\"ahler case, Sherman-Weinkove  \cite{ShW3} consider $\omega(s)$ solving an \emph{elliptic} version of the Chern-Ricci flow,
\begin{equation}  \label{continuity}
\omega(s) = \omega_0 - s \textrm{Ric}(\omega(s)), \quad s \ge 0,
\end{equation}
known as the \emph{continuity equation}.  It is shown in \cite{ShW3} that with a given initial Hermitian metric $\omega_0$ on a compact complex manifold, (\ref{continuity}) has a unique solution for $s \in [0,T)$ for the same $T$ as (\ref{T}).  This equation has the feature that the Chern-Ricci form is automatically bounded from below.  The equation (\ref{continuity}) has also been extended to the settings of almost Hermitian metrics \cite{LZ} and Gauduchon metrics \cite{Zhengc}.

\subsection{Curvature conditions}

Bando \cite{Ba} and Mok \cite{Mok} showed that the non-negativity of the bisectional curvature is preserved along the K\"ahler-Ricci flow.  More precisely, writing $R_{i\ov{j}k \ov{\ell}}$ for the components of the curvature tensor then (Griffiths) non-negativity asserts that at each point on the manifold,
$$R_{i\ov{j}k \ov{\ell}} X^i X^{\ov{j}} Y^k Y^{\ov{\ell}} \ge 0,$$
for all $(1,0)$ vectors $X, Y$.  It is natural then to ask whether similar curvature conditions may be preserved along the Chern-Ricci flow or other flows on complex manifolds.

The only known result along these lines for the Chern-Ricci flow is a negative one.   By considering Hopf manifolds, Yang \cite{Yang} gave an example to show that non-negativity of the bisectional curvature (with respect to the Chern connection) is \emph{not} preserved
along the Chern-Ricci flow.

On the other hand, Ustinovskiy \cite{U} considered the following example of the Hermitian curvature flow of Streets-Tian \cite{STi2},
\begin{equation} \label{u}
\left\{
                \begin{aligned}
                  &\frac{\de \omega}{\de t}  =-\mathrm{Ric}^{(2)}(\omega)+Q,\\
                  &\omega(0)=\omega_0,
                \end{aligned}
              \right.
\end{equation}
where $\mathrm{Ric}^{(2)}(\omega)$ is the \emph{second Chern-Ricci form} and $Q= \frac{1}{2}T * T$ is a certain quadratic term involving the torsion.  Ustinovskiy showed that (\ref{u})
 \emph{does} preserve the non-negativity of the bisectional curvature and used this to prove a Hermitian version of the Frankel conjecture \cite{U}.  Perhaps surprisingly, Ustinovskiy's flow coincides with a special case of the Anomaly flow \cite{FP}.

 Later, Lee \cite{Lee} showed  that, taking instead $Q=0$ in (\ref{u}), the  non-positivity of the Chern-Ricci curvature is preserved if the initial metric has \emph{non-positive} bisectional curvature.  He used this to show a compact Hermitian manifold with non-positive bisectional curvature and negative Chern-Ricci curvature at one point must have ample canonical bundle (and in particular, admit a K\"ahler metric).

While research on this topic is still at an early stage, the results so far suggest that flows involving the \emph{second} Chern-Ricci curvature seem to have desirable properties with respect to preservation of curvature conditions. On the other hand, unlike the Chern-Ricci flow such flows are not equivalent to scalar PDEs, and many results of the type that we have presented are not available (for example, a characterization of the maximal existence time).

Lastly, we mention here the work of Angella-Sferruzza \cite{AS}, who study whether the property of geometric formality of a metric (i.e. the wedge product of harmonic forms is still harmonic) is preserved by the Chern-Ricci flow.

\subsection{Noncompact manifolds}  The K\"ahler-Ricci flow on noncompact complete manifolds was first studied by Shi \cite{Shi1}.  It was  used  by Chen-Tang-Zhu \cite{CTZ}, Chau-Tam \cite{ChT}, Ni \cite{Ni} and others to make progress towards Yau's uniformization conjecture (which remains open as stated):  \emph{a complete noncompact K\"ahler manifold with positive holomorphic bisectional curvature is biholomorphic to $\mathbb{C}^n$}.    Earlier work on this conjecture was carried out by Mok-Siu-Yau \cite{MSY} and Mok \cite{Mok0}.

Liu \cite{Liu}, using non-flow methods, proved the strongest result known so far:  that a complete noncompact K\"ahler manifold with positive bisectional curvature and maximal volume growth is biholomorphic to $\mathbb{C}^n$.  In fact his result holds under the weaker hypothesis of nonnegative, rather than strictly positive, bisectional curvature.

Surprisingly,  Lee-Tam \cite{LeeTam} then used the Chern-Ricci flow to give a different (and almost contemporaneous) proof of Liu's theorem.   Lee-Tam construct a solution of the K\"ahler-Ricci flow starting from the given complete metric $g_0$ with possibly unbounded curvature at infinity.  The crux of their idea is that to deal with the possible bad behavior of the metric $g_0$ at infinity they make a conformal change to it outside some large compact set $S$.  The conformal change destroys the K\"ahlerity of the metric $g_0$ outside $S$, so instead they apply the \emph{Chern-Ricci flow} to this new metric which is only Hermitian outside of  $S$.  A crucial feature of the Chern-Ricci flow is that if a metric is K\"ahler in $S$ then it remains so along the flow.  By taking a family of exhausting sets $S$ they construct a global solution of the K\"ahler-Ricci flow, and then apply the result of Chau-Tam \cite{ChT} to conclude that the manifold is biholomorphic to $\mathbb{C}^n$.

In fact, Lee-Tam prove an analogue of the maximal existence time result of Section \ref{sect_pde} in the non-compact setting. These results were later improved and used in \cite{LeeTam2, Lott} to study Gromov-Hausdorff limits.  For some further work in the non-compact setting, see \cite{HLT}.


\begin{thebibliography}{99}
\bibitem{AS} Angella, D., Sferruzza, T. {\em  Geometric formalities along the Chern-Ricci flow}, Complex Anal. Oper. Theory {\bf 14} (2020), no. 1, Art. 27, 29 pp.
\bibitem{AT} Angella, D., Tosatti, V. {\em Leafwise flat forms on Inoue-Bombieri surfaces}, arXiv:2106.16141.
\bibitem{Ba} Bando, S. {\em On three-dimensional compact K\"ahler manifolds of nonnegative bisectional curvature}, J. Differential Geom. {\bf 19} (1984), 284--297.
\bibitem{bhpv} Barth, W.P., Hulek, K., Peters, C.A.M., Van de Ven, A. \emph{Compact complex surfaces}, Springer-Verlag, Berlin, 2004.
\bibitem{Bo} Bogomolov, F.A. {\em Surfaces of class ${\rm VII}\sb{0}$ and affine geometry}, Izv. Akad. Nauk SSSR Ser. Mat. {\bf 46} (1982), no. 4, 710--761.
\bibitem{Bu1} Buchdahl, N. {\em On compact K\"ahler surfaces}, Ann. Inst. Fourier (Grenoble) {\bf 49} (1999), no. 1, 287--302.
\bibitem{Cao} Cao, H.-D. {\em Deformation of K\"ahler metrics to K\"ahler-Einstein metrics on compact K\"ahler manifolds}, Invent. Math. {\bf 81}  (1985), no. 2, 359--372.
\bibitem{ChT} Chau, A., Tam, L.-F. {\em On the complex structure of K\"ahler manifolds with nonnegative curvature}, J. Differential Geom. {\bf 73} (2006), no. 3, 491--530.
\bibitem{CTZ} Chen, B.-L., Tang, S.-H., Zhu, X.-P. {\em A uniformization theorem for complete non-compact K\"ahler surfaces with positive bisectional curvature}, J. Differential Geom. {\bf 67} (2004), no. 3, 519--570.
\bibitem{CT}  Collins, T.C., Tosatti, V. {\em K\"ahler currents and null loci}, Invent. Math. {\bf 202} (2015), no.3, 1167--1198.
\bibitem{Chu} Chu, J. {\em The parabolic Monge-Amp\`ere equation on compact almost Hermitian manifolds}, J. Reine Angew. Math. {\bf 761} (2020), 1--24.
\bibitem{Chu2} Chu, J. {\em $C^{2,\alpha}$ regularities and estimates for nonlinear elliptic and parabolic equations in geometry}, Calc. Var. Partial Differential Equations {\bf 55} (2016), no. 1, Art. 8, 20 pp.
\bibitem{CTW}  Chu, J., Tosatti, V., Weinkove, B. {\em The Monge-Amp\`ere equation for non-integrable almost complex structures}, J. Eur. Math. Soc. (JEMS) {\bf 21} (2019), no. 7, 1949--1984.
\bibitem{E} Edwards, G. {\em The Chern-Ricci flow on primary Hopf surfaces}, to appear in Math. Z.
\bibitem{FTWZ} Fang, S., Tosatti, V., Weinkove, B., Zheng, T. {\em Inoue surfaces and the Chern-Ricci flow}, J. Funct. Anal. {\bf 271} (2016), no. 11, 3162--3185.
\bibitem{FP} Fei, T., Phong, D.H. {\em Unification of the K\"ahler-Ricci and Anomaly flows}, Surveys in Differential Geometry {\bf 23} (2018), 89--104, International Press, 2020.
\bibitem{FIK} Feldman, M., Ilmanen, T., Knopf, D. {\em Rotationally symmetric shrinking and expanding gradient K\"ahler-Ricci solitons},  J. Differential Geom. {\bf 65}  (2003),  no. 2, 169--209.
\bibitem{FT} Fino, A., Tomassini, A. {\em On astheno-K\"ahler metrics}, J. Lond. Math. Soc. (2) {\bf 83} (2011), no. 2, 290--308.
\bibitem{Fong} Fong, F.T.-H. {\em K\"ahler-Ricci flow on projective bundles over K\"ahler-Einstein manifolds}, Trans. Amer. Math. Soc. {\bf 366} (2014), no. 2, 563--589.
\bibitem{FZ} Fong, F.T.-H., Zhang, Z. {\em The collapsing rate of the K\"ahler-Ricci flow with regular infinite time singularity}, J. Reine Angew. Math. {\bf 703} (2015), 95--113.
\bibitem{GSt} Garcia-Fernandez, M., Streets, J. {\em Generalized Ricci flow}, University Lecture Series 76, AMS, 2021.
\bibitem{Ga} Gauduchon, P. {\em Le th\'eor\`eme de l'excentricit\'e nulle}, C. R. Acad. Sci. Paris {\bf 285} (1977), 387--390.
\bibitem{Gau} Gauduchon, P. {\em Fibr\'es hermitiens \`a endomorphisme de Ricci non n\'egatif}, Bull. Soc. Math. France {\bf 105} (1977), no. 2, 113--140.
\bibitem{Ga3} Gauduchon, P. {\em La 1-forme de torsion d'une vari\'et\'e hermitienne compacte}, Math. Ann. {\bf 267} (1984), 495--518.
\bibitem{GO} Gauduchon, P., Ornea, L. {\em Locally conformally K\"ahler metrics on Hopf surfaces}, Ann. Inst. Fourier (Grenoble) {\bf 48} (1998), no. 4, 1107--1127.
\bibitem{G} Gill, M. {\em Convergence of the parabolic complex Monge-Amp\`ere equation on compact Hermitian manifolds}, Comm. Anal. Geom. {\bf 19} (2011), no. 2, 277--303.
\bibitem{G2} Gill, M. {\em Collapsing of products along the K\"ahler-Ricci flow}, Trans. Amer. Math. Soc. {\bf 366} (2014), no. 7, 3907--3924.
\bibitem{G3} Gill, M. {\em The Chern-Ricci flow on smooth minimal models of general type}, arXiv:1307.0066.
\bibitem{G4} Gill, M. {\em Long time existence of the $(n-1)$-plurisubharmonic flow}, arXiv:1410.6958.
\bibitem{GS} Gill, M., Smith, D.J. {\em The behavior of Chern scalar curvature under Chern-Ricci flow},  Proc. Amer. Math. Soc. {\bf 143} (2015), no. 11, 4875--4883.
\bibitem{GL}  Guan, B., Li, Q.  {\em Complex Monge-Amp\`ere equations and totally real submanifolds}, Adv. Math. {\bf 225} (2010), no. 3, 1185--1223.
\bibitem{GSW} Guo, B., Song, J., Weinkove, B. {\em Geometric convergence of the K\"ahler-Ricci flow on complex surfaces of general type}, Int. Math. Res. Not. IMRN {\bf 2016}, no. 18, 5652--5669.
\bibitem{H} Hamilton, R.S. {\em Three-manifolds with positive Ricci curvature}, J. Differential Geom. {\bf 17} (1982), no. 2, 255--306.
\bibitem{Ha2} Hamilton, R.S. {\em The formation of singularities in the Ricci flow},  Surveys in Differential Geometry {\bf 2} (1993), 7--136, International Press, 1995.
\bibitem{HT}  Hein, H.-J.,  Tosatti, V. \emph{Remarks on the collapsing of torus fibered Calabi-Yau manifolds},  Bull. Lond. Math. Soc. {\bf 47} (2015), no. 6, 1021--1027.
\bibitem{HLT} Huang, S., Lee, M.-C., Tam, L.-F. {\em Instantaneously complete Chern-Ricci flow and K\"ahler-Einstein metrics}, Calc. Var. Partial Differential Equations {\bf 58} (2019), no. 5, Paper No. 161, 34 pp.
\bibitem{In} Inoue, M. {\em On surfaces of Class $VII_0$}, Invent. Math. {\bf 24} (1974), 269--310.
\bibitem{Ka}  Kato, Ma. {\em Compact complex manifolds containing ``global'' spherical shells. I}, in {\em Proceedings of the International Symposium on Algebraic Geometry (Kyoto Univ., Kyoto, 1977)}, 45-–84, Kinokuniya Book Store, Tokyo, 1978.
\bibitem{Kaw} Kawamura, M. {\em On the $C^{\alpha}$-convergence of the solution of the Chern-Ricci flow on elliptic surfaces}, Tokyo J. Math. {\bf 39} (2016), no. 1, 215--224.
\bibitem{KT} Kelleher, C.L., Tian, G. {\em Almost Hermitian Ricci flow}, arXiv:2001.06670.
\bibitem{LT} La Nave, G., Tian, G. {\em A continuity method to construct canonical metrics},  Math. Ann. {\bf 365} (2016), no. 3-4, 911--921.
\bibitem{L} Lauret, J. {\em Curvature flows for almost-hermitian Lie groups}, Trans. Amer. Math. Soc. {\bf 367} (2015), no. 10, 7453--7480.
\bibitem{LR} Lauret, J., Rodr\'iguez Valencia, E.A. {\em On the Chern-Ricci flow and its solitons for Lie groups}, Math. Nachr. {\bf 288} (2015), no. 13, 1512--1526.
\bibitem{Lee} Lee, M.-C. {\em Hermitian manifolds with quasi-negative curvature}, Math. Ann. {\bf 380} (2021), no. 1-2, 733--749.
\bibitem{LeeTam}  Lee, M.-C., Tam, L.-F. {\em Chern-Ricci flows on noncompact complex manifolds}, J. Differential Geom. {\bf 115} (2020), no. 3, 529--564.
\bibitem{LeeTam2}  Lee, M.-C., Tam, L.-F. {\em K\"ahler manifolds with almost non-negative curvature}, to appear in Geom. Topol.
\bibitem{LYZ} Li, J., Yau, S.-T., Zheng, F. {\em On projectively flat Hermitian manifolds}, Comm. Anal. Geom. {\bf 2} (1994), 103--109.
\bibitem{LZ}  Li, C., Zheng, T. {\em The continuity equation of almost Hermitian metrics}, J. Differential Equations {\bf 274} (2021), 1015--1036.
\bibitem{Liu} Liu, G. {\em On Yau's uniformization conjecture}, Camb. J. Math. {\bf 7} (2019), no. 1-2, 33--70.
\bibitem{LY} Liu, K., Yang, X. {\em Geometry of Hermitian manifolds},  Internat. J. Math. {\bf 23} (2012), no. 6, 1250055, 40 pp.
\bibitem{Lott} Lott, J. {\em Comparison geometry of holomorphic bisectional curvature for K\"ahler manifolds and limit spaces}, to appear in Duke Math. J.
\bibitem{Mok0} Mok, N. {\em An embedding theorem of complete K\"ahler manifolds of positive bisectional
curvature onto affine algebraic varieties}, Bull. Soc. Math. France {\bf 112} (1984),  no. 2, 197--
250.
\bibitem{Mok} Mok, N. {\em The Uniformization Theorem for compact K\"ahler manifolds of nonnegative
holomorphic bisectional curvature}, J. Differential Geom. {\bf 27} (1988), 179--214.
\bibitem{MSY} Mok, N., Siu, Y.-T. and Yau, S.-T., {\em The Poincar\'e-Lelong equation on complete K\"ahler
manifolds}, Compositio Math. {\bf 44} (1981), no. 1-3, 183--218.
\bibitem{Na} Nakamura, I. {\em Classification of non-K\"ahler complex surfaces}, Sugaku Expositions {\bf 2} (1989), no. 2, 209--229.
\bibitem{Ngu} Nguyen, N.C. {\em The complex Monge-Amp\`ere type equation on compact Hermitian manifolds and applications}, Adv. Math. {\bf 286} (2016), 240--285.
\bibitem{Ni}  Ni, L. {\em Ancient solutions to K\"ahler-Ricci flow}, Math. Res. Lett. {\bf 12} (2005), no. 5-6, 633--653.
\bibitem{Nie1} Nie, X. {\em Regularity of a complex Monge-Amp\`ere equation on Hermitian manifolds}, Comm. Anal. Geom. {\bf 22} (2014), no. 5, 833--856.
\bibitem{Nie2} Nie, X. {\em Weak solution of the Chern-Ricci flow on compact complex surfaces},  Math. Res. Lett. {\bf 24} (2017), no. 6, 1819--1844.
\bibitem{OT} Oeljeklaus, K., Toma, M. {\em Non-K\"ahler compact complex manifolds associated to number fields}, Ann. Inst. Fourier (Grenoble) {\bf 55} (2005), no. 1, 161--171.
\bibitem{PPZsurvey} Phong, D.H., Picard, S., Zhang, X. {\em New curvature flows in complex geometry},  Surveys in Differential Geometry {\bf 22} (2017), 331--364, International Press, 2018.
\bibitem{PPZ2} Phong, D.H., Picard, S., Zhang, X. {\em Anomaly flows}, Comm. Anal. Geom. {\bf 26} (2018), no. 4, 955--1008.
\bibitem{PT} Phong, D.H., T\^o, T.D. {\em Fully non-linear parabolic equations on compact Hermitian manifolds}, to appear in  Ann. Sci. \'Ec. Norm. Sup\'er.
\bibitem{Ser} Serrano, F. {\em The Picard group of a quasi-bundle}, Manuscripta Math. {\bf 73} (1991), no. 1, 63--82.
\bibitem{SeT} Sesum, N., Tian, G. {\em Bounding scalar curvature and diameter along the K\"ahler Ricci flow (after Perelman)}, J. Inst. Math. Jussieu {\bf 7} (2008), no. 3, 575--587.
\bibitem{Shen} Shen, X.S. {\em Estimates for metrics of constant Chern scalar curvature}, to appear in Math. Res. Lett.
\bibitem{Shen2} Shen, X.S. {\em A Chern-Calabi flow on Hermitian manifolds}, arXiv:2011.09683.
\bibitem{ShW} Sherman, M., Weinkove, B. {\em Interior derivative estimates for the K\"ahler-Ricci flow}, Pacific J. Math. {\bf 257} (2012), no. 2, 491--501.
\bibitem{ShW2} Sherman, M., Weinkove, B. {\em Local Calabi and curvature estimates for the Chern-Ricci flow}, New York J. Math. {\bf 19} (2013), 565--582.
\bibitem{ShW3} Sherman, M., Weinkove, B. {\em The continuity equation, Hermitian metrics and elliptic bundles}, J. Geom. Anal. {\bf 30} (2020), no. 1, 762--776.
\bibitem{Shi1} Shi, W.-X. {\em Ricci deformation of the metric on complete noncompact Riemannian manifolds}, J. Differential Geom. {\bf 30} (1989), 223--301.
\bibitem{Shi2} Shi, W.-X. {\em Ricci flow and the uniformization on complete noncompact K\"ahler manifolds}, J. Differential Geom. {\bf 45} (1997), no. 1, 94--220.
\bibitem{Sm} Smith, K. {\em Parabolic complex Monge-Amp\`ere equations on compact Hermitian manifolds}, preprint, arXiv:2004.02736.
\bibitem{SSW} Song, J., Sz\'ekelyhidi, G., Weinkove, B. {\em The K\"ahler-Ricci flow on projective bundle}. Int. Math. Res. Not. IMRN {\bf 2013}, no. 2, 243--257.
\bibitem{ST} Song, J., Tian, G. {\em The K\"ahler-Ricci flow on surfaces of positive Kodaira dimension}, Invent. Math. {\bf 170} (2007), no. 3, 609--653.
\bibitem{ST2} Song, J., Tian, G. {\em Canonical measures and K\"ahler-Ricci flow}, J. Amer. Math. Soc. {\bf 25} (2012), no. 2, 303--353.
\bibitem{ST4} Song, J., Tian, G. {\em The K\"ahler-Ricci flow through singularities}, Invent. Math. {\bf 207} (2017), no. 2, 519--595.
\bibitem{STZ} Song, J., Tian, G.,  Zhang, Z.L. {\em Collapsing behavior of Ricci-flat K\"ahler metrics and long time solutions of the K\"ahler-Ricci flow}, preprint, arXiv:1904.08345.
\bibitem{SW0} Song, J., Weinkove, B. {\em The K\"ahler-Ricci flow on Hirzebruch surfaces}, J. Reine Angew. Math. 659 (2011), 141--168.
\bibitem{SW1} Song, J., Weinkove, B. {\em Contracting exceptional divisors by the K\"ahler-Ricci flow}, Duke Math. J. {\bf 162} (2013), no. 2, 367--415.
\bibitem{SW2} Song, J., Weinkove, B. {\em Contracting exceptional divisors by the K\"ahler-Ricci flow II}, Proc. Lond. Math. Soc. (3) {\bf 108} (2014), no. 6, 1529--1561.
\bibitem{SW3} Song, J., Weinkove, B. {\em An introduction to the K\"ahler-Ricci flow}, in \emph{An introduction to the K\"ahler-Ricci flow}, 89--188, Lecture Notes in Math., 2086, Springer, Cham, 2013.
\bibitem{St} Streets, J. {\em Pluriclosed flow and the geometrization of complex surfaces}, in {\em Geometric Analysis}, 471--510, Springer, 2020.
\bibitem{STi} Streets, J., Tian, G. {\em A parabolic flow of pluriclosed metrics}, Int. Math. Res. Not. {\bf 2010} (2010), no. 16, 3103--3133.
\bibitem{STi2} Streets, J., Tian, G. {\em  Hermitian curvature flow}, J. Eur. Math. Soc. {\bf 13} (2011), 601--634.
\bibitem{STW} Sz\'ekelyhidi, G., Tosatti, V., Weinkove, B. {\em Gauduchon metrics with prescribed volume form}, Acta Math. {\bf 219} (2017), no. 1, 181--211.
\bibitem{T0} Teleman, A. {\em Projectively flat surfaces and Bogomolov's theorem on class $VII_{0}$-surfaces},  Int. J. Math. {\bf 5} (1994), 253--264.
\bibitem{T1} Teleman, A. {\em Donaldson theory on non-K\"ahlerian surfaces and class VII surfaces with $b_2=1$}, Invent. Math. {\bf 162} (2005), no. 3, 493--521.
\bibitem{T3} Teleman, A. {\em Instantons and curves on class VII surfaces}, Ann. of Math. (2) {\bf 172} (2010), no. 3, 1749--1804.
\bibitem{T4} Teleman, A. {\em Non-K\"ahlerian compact complex surfaces}, in {\em Complex Non-K\"ahler Geometry}, 121--161, Lecture Notes in Mathematics, 2246, Springer, Cham, 2019.
\bibitem{Ti} Tian, G. {\em New results and problems on K\"ahler-Ricci flow}, Ast\'erisque No. {\bf 322} (2008), 71--92.
\bibitem{TZ} Tian, G., Zhang, Z. {\em On the K\"ahler-Ricci flow on projective manifolds of general type}, Chinese Ann. Math. Ser. B {\bf 27} (2006), no. 2, 179--192.
\bibitem{TZ2} Tian, G., Zhang, Z.L. {\em Convergence of K\"ahler-Ricci flow on lower-dimensional algebraic manifolds of general type}, Int. Math. Res. Not. IMRN {\bf 2016}, no. 21, 6493--6511.
\bibitem{To} T\^{o}, T.D. {\em Regularizing properties of complex Monge-Amp\`ere flows II: Hermitian manifolds}, Math. Ann. {\bf 372} (2018), no. 1-2, 699--741.
\bibitem{Tos} Tosatti, V. {\em Non-K\"ahler Calabi-Yau manifolds}, in {\em Analysis, complex geometry, and mathematical physics: in honor of Duong H. Phong}, 261--277, Contemp. Math., 644, Amer. Math. Soc., Providence, RI, 2015.
\bibitem{Tos2} Tosatti, V. {\em KAWA lecture notes on the K\"ahler-Ricci flow},  Ann. Fac. Sci. Toulouse Math. {\bf 27} (2018), no. 2, 285--376.
\bibitem{TW2} Tosatti, V., Weinkove, B. {\em The complex Monge-Amp\`ere equation on compact Hermitian manifolds}, J. Amer. Math. Soc. {\bf 23} (2010), no.4, 1187--1195.
\bibitem{TW3} Tosatti, V., Weinkove, B. {\em The Chern-Ricci flow on complex surfaces},  Compos. Math. {\bf 149} (2013), no. 12, 2101--2138.
\bibitem{TW} Tosatti, V., Weinkove, B. \emph{On the evolution of a Hermitian metric by its Chern-Ricci form}, J. Differential Geom. {\bf 99} (2015), no.1, 125--163.
\bibitem{TWY} Tosatti, V., Weinkove, B., Yang, X. {\em Collapsing of the Chern-Ricci flow on elliptic surfaces},  Math. Ann. {\bf 362} (2015), no. 3-4, 1223--1271.
\bibitem{ToZ} Tosatti, V., Zhang, Y. {\em Infinite time singularities of the K\"ahler-Ricci flow}, Geom. Topol. {\bf 19} (2015), no. 5, 2925--2948.
\bibitem{ToZ2} Tosatti, V., Zhang, Y. {\em  Finite time collapsing of the K\"ahler-Ricci flow on threefolds}, Ann. Sc. Norm. Super. Pisa Cl. Sci. {\bf 18} (2018), no.1, 105--118.
\bibitem{Tr} Tricerri, F. {\em Some examples of locally conformal K\"ahler manifolds}, Rend. Sem. Mat. Univ. Politec. Torino {\bf 40} (1982), no. 1, 81--92.
\bibitem{Ts} Tsuji, H. \emph{Existence and degeneration of K\"ahler-Einstein metrics on minimal algebraic varieties of general type}, Math. Ann. {\bf 281} (1988), no. 1, 123--133.
\bibitem{Ts2} Tsuji, H. {\em Degenerate Monge-Amp\`ere equation in algebraic geometry.}, in {\em Miniconference on Analysis and Applications (Brisbane, 1993)}, 209--224, Proc. Centre Math. Appl. Austral. Nat. Univ., 33, Austral. Nat. Univ., Canberra, 1994.
\bibitem{U} Ustinovskiy, Y. {\em The Hermitian curvature flow on manifolds with non-negative Griffiths curvature}, Amer. J. Math. {\bf 141}, (2019), no. 6, 1751-1775.
\bibitem{Va} Vaisman, I. {\em Non-K\"ahler metrics on geometric complex surfaces}, Rend. Sem. Mat. Univ. Politec. Torino {\bf 45} (1987), no. 3, 117--123.
\bibitem{V} Vezzoni, L. {\em On Hermitian curvature flow on almost complex manifolds},  Differential Geom. Appl. {\bf 29} (2011), no. 5, 709--722.
\bibitem{Wa} Wall, C.T.C. {\em Geometric structures on compact complex analytic surfaces}, Topology {\bf 25} (1986), no. 2, 119--153.
\bibitem{Wang} Wang, B. {\em The local entropy along Ricci flow Part A: the no-local-collapsing theorems}, Camb. J. Math. {\bf 6} (2018), no. 3, 267--346.
\bibitem{Yang} Yang, X. {\em The Chern-Ricci flow and holomorphic bisectional curvature}, Sci. China Math. {\bf 59} (2016), no. 11, 2199--2204.
\bibitem{Ya}Yau, S.-T. {\em A general Schwarz lemma for K\"ahler manifolds}, Amer. J. Math. {\bf 100} (1978), no. 1, 197--203.
\bibitem{Y} Yau, S.-T. {\em On the Ricci curvature of a compact K\"ahler manifold and the complex Monge-Amp\`ere equation, I}, Comm. Pure Appl. Math.
{\bf 31} (1978), no.3, 339--411.
\bibitem{Z2} Zhang, Z. {\em Scalar curvature behavior for finite-time singularity of K\"ahler-Ricci flow}, Michigan Math. J. {\bf 59} (2010), no. 2, 419--433.
\bibitem{Zheng} Zheng, T. {\em The Chern-Ricci flow on Oeljeklaus-Toma manifolds}, Canad. J. Math. {\bf 69} (2017), no. 1, 220--240.
\bibitem{Zheng2} Zheng, T. {\em An almost complex Chern-Ricci flow}, J. Geom. Anal. {\bf 28} (2018), no. 3, 2129--2165.
\bibitem{Zheng3} Zheng, T. {\em A parabolic Monge-Amp\`ere type equation of Gauduchon metrics}, Int. Math. Res. Not. IMRN {\bf 2019} (2019), no. 17, 5497--5538.
\bibitem{Zhengc} Zheng, T. {\em The continuity equation of the Gauduchon metrics}, Pacific J. Math. {\bf 310} (2021), no. 2, 487--510.
\end{thebibliography}
\end{document}